\theoremstyle{remark}
\newtheorem{Def}{{\rm Definition}}
\newtheorem{Ex}{{\rm Example}}
\newtheorem{Cor}{Corollary}
\newtheorem{Fact}{Fact}
\newtheorem{Prop}{Proposition}
\newtheorem{Thm}{Theorem}
\newtheorem{Lem}{Lemma}
\newtheorem*{Prob*}{Problem}
\begin{document}
\title[Torsion subgroups of homology groups of Reeb spaces of fold maps]{Explicit remarks on the torsion subgroups of homology groups of Reeb spaces of explicit fold maps}
\author{Naoki Kitazawa}
\keywords{Singularities of differentiable maps; generic maps. Differential topology. Reeb spaces.}
\subjclass[2010]{Primary~57R45. Secondary~57N15.}
\address{Institute of Mathematics for Industry, Kyushu University, 744 Motooka, Nishi-ku Fukuoka 819-0395, Japan}
\email{n-kitazawa@imi.kyushu-u.ac.jp}
\maketitle
\begin{abstract}
{\it Fold} maps are higher dimensional versions of Morse functions and fundamental and important tools in studying algebraic and differential topological properties of manifolds: as the theory established by Morse and the higher dimensional version, started by Thom and Whitney, later actively studied by Eliashberg, Levine etc. and recently studied by Kobayashi, Saeki, Sakuma etc., explicitly show this.

One of fundamental, important and difficult studies on this field is, constructing explicit fold maps and investigating their source manifolds. As fundamental and strong tools for systematic construction, the author has introduced surgery operations ({\it bubbling operations}) to fold maps, motivated by studies of Kobayashi etc. since 1990. The author has explicitly shown that homology groups of {\it Reeb spaces} of maps constructed by iterations of these operations are generally flexible and restricted in several specific cases. The {\it Reeb space} of a map is defined as the space of all connected components of inverse images of the maps, inheriting fundamental invariants of manifolds such as homology groups etc. in considerable cases. In general, Reeb spaces are fundamental and important tools in the field.

In this paper, we explicitly remark on the torsion subgroups of the homology groups. More precisely, under explicit algebraic constraints, we see explicit strong restrictions related with the torsion subgroups, where the homology groups seem to be very flexible in general. We note that this work is similar to several works by the author before but a work in a new situation and that new technique such as well-known fundamental theory of abstract algebra will be used.      
          
%  For example, we investigate algebraic topological restrictions on {\it Reeb spaces} of the fold maps. %They are essential tools in studying manifolds by using generic maps, and the source manifolds. We also %show flexibility of homology groups of the Reeb spaces and the source manifolds, for example.    

% Abstract text, usually no more than 200 words.
% Avoid bibliographic references (\cite) and complicated mathematics.
% Please do not use custom macros here, as this abstract has to 
% be able to stand alone.  You may use standard tex/latex/AMS macros.
\end{abstract}

% Leave these items like this, and the journal will fill them in.
% \received{Month Day, Year}   % receive date (for example: October 11, 1999)
% \revised{Month Day, Year}    % date of revision; omit, if no revision;
%                             % if multiple revisions, separate by commas
% \published{Month Day, Year}  % publish date\submitted{Bill Murray}      % Name of Journal's Editor, 
% who handled Article 
% \volumeyear{2014} % Volume Year
% \volumenumber{16} % Volume Number 
% \issuenumber{2}   % Issue Number
% \startpage{1}     % PageNumber of first page
% \articlenumber{1} % Sequence number of article within issue
% If copyright is retained by author, comment this out:
% \owner{International Press}
\section{Introduction}
First, in this paper, manifolds, maps between them, bundles whose fibers are manifolds etc. are fundamental objects and they are smooth and of class $C^{\infty}$ unless otherwise stated.

We  explain several terminologies. 

For a smooth map, a {\it singular} point is a point such that the rank of the differential at the point drops, the {\it singular set} of the map is the set of all the singular points, a {\it singular value} of the map is the value at a singular point of the map, the {\it singular value set} of the map is the image of the singular set, a {\it regular value} of the map is the point in a complement of the singular value set and  the {\it regular value set} of the map is the complement or the set of all the regular values. 

We call a bundle whose fiber is a topological space $X$ an {\it $X$-bundle}.

We note about homotopy spheres. A homotopy sphere is called an {\it exotic} sphere if it is not diffeomorphic to a standard sphere and an {\it almost-sphere} if it is obtained by gluing two copies of a standard closed sphere on the boundaries by a diffeomorphism. Every homotopy sphere except $4$-dimensional exotic spheres, being undiscovered, is an almost-sphere.
A {\it linear} bundle whose fiber is a standard closed disc (an unit disc) or a standard sphere (an unit sphere) means a bundle whose structure group acts on the fiber linearly.

We also note that most of the content of the introduction is similar to that of \cite{kitazawa6}. 
\subsection{Fold maps and special generic maps}
{\it Fold} maps are higher dimensional versions of Morse functions and fundamental and important tools in the theory of Morse functions and higher dimensional versions and application to algebraic and differential topological theory of manifolds.
\begin{Def}
\label{def:1}
Let $m$ and $n$ positive integers satisfying the relation $m \geq n \geq 1$. A smooth map between $m$-dimensional smooth manifold without boundary and $n$-dimensional smooth manifold without boundary is said to be a {\it fold map} if at each singular point $p$, the form is
$$(x_1, \cdots, x_m) \mapsto (x_1,\cdots,x_{n-1},\sum_{k=n}^{m-i}{x_k}^2-\sum_{k=m-i+1}^{m}{x_k}^2)$$
 for an integer $0 \leq i(p) \leq \frac{m-n+1}{2}$.
\end{Def}

\begin{Prop}
 For any singular point $p$ of the fold map in Definition \ref{def:1}, the $i(p)$ is unique {\rm (}$i(p)$ is called the {\rm index} of $p${\rm )},  the set of all singular points of a fixed index of the map is a closed smooth submanifold of dimension $n-1$ of the source manifold and the restriction map to the singular set is a smooth immersion of codimension $1$.
\end{Prop}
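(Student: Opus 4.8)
The plan is to deduce every assertion from the explicit normal form of Definition \ref{def:1}, by a direct computation of the differential, and to obtain the uniqueness of the index by replacing this coordinate description with a coordinate-free one.

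First I would write down the Jacobian of the normal form around a singular point $p$, which we may take to be the origin of the chart. The first $n-1$ component functions are the coordinate functions $x_1,\dots,x_{n-1}$, so their differentials are linearly independent at every point of the chart, whereas the last component is the nondegenerate quadratic form $Q=\sum_{k=n}^{m-i}{x_k}^2-\sum_{k=m-i+1}^{m}{x_k}^2$ in the remaining variables, so $dQ=2\sum_{k=n}^{m-i}x_k\,dx_k-2\sum_{k=m-i+1}^{m}x_k\,dx_k$ involves only $dx_n,\dots,dx_m$. Hence the rank of the differential is $n$ at the points of the chart with $(x_n,\dots,x_m)\neq 0$ and is $n-1$ exactly on the coordinate plane $\{x_n=\cdots=x_m=0\}$. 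This shows that, locally, the singular set is that $(n-1)$-plane, hence a smooth submanifold of dimension $n-1$; and in these coordinates the restriction of the map to the singular set is $(x_1,\dots,x_{n-1})\mapsto(x_1,\dots,x_{n-1},0)$, an embedding onto an $(n-1)$-dimensional coordinate plane, in particular a smooth immersion of codimension $1$ in the target.

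Next I would treat the index. At a singular point $p$ the kernel $K_p=\ker df_p\subset T_pM$ has dimension $m-n+1$, the cokernel $C_p=T_{f(p)}N/\mathrm{im}\,df_p$ has dimension $1$, and there is an intrinsic Hessian $d^2f_p\colon K_p\times K_p\to C_p$ (obtained by differentiating, along a kernel direction, a vector field extending a kernel vector, and reducing modulo the image), which is natural under composition with diffeomorphisms of the source and of the target. Computing $d^2f_p$ in the normal form shows that, under the canonical identifications, it is the quadratic form $2Q$ with values on the last target coordinate axis; so $d^2f_p$ is nondegenerate, and relative to one generator of $C_p$ it has $i$ negative squares and $m-n+1-i$ positive squares, these two numbers being interchanged if the opposite generator is used. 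By Sylvester's law of inertia the unordered pair $\{\,i,\ m-n+1-i\,\}$ is therefore an invariant of $p$ alone, and the constraint $0\leq i(p)\leq\frac{m-n+1}{2}$ of Definition \ref{def:1} selects its smaller member; hence $i(p)$ is well defined. Finally, since $i$ is constant on each normal-form chart it is locally constant on the singular set, and the singular set is closed in $M$ because the rank of a differential is lower semicontinuous; so the set of singular points of a fixed index is both open and closed in the singular set, hence a closed submanifold of $M$ of dimension $n-1$.

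The Jacobian computation and the reading-off of $Q$ are routine; the one point that deserves care, and the main obstacle, is the coordinate-freeness underlying the index: one must check that the Hessian $d^2f_p$ with values in the one-dimensional cokernel is genuinely well defined and natural under diffeomorphisms, so that Sylvester's law can be invoked and the sign indeterminacy it leaves is exactly the one removed by the stated bound on $i$.
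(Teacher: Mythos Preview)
Your argument is correct and is the standard one: the local computation of the Jacobian identifies the singular set as an $(n-1)$-plane in each normal-form chart and shows the restriction is an immersion of codimension $1$; the intrinsic Hessian $d^2f_p\colon \ker df_p\times\ker df_p\to\operatorname{coker}df_p$, well defined at any corank-$1$ singular point and natural under source and target diffeomorphisms, together with Sylvester's law, makes the unordered pair $\{i,\,m-n+1-i\}$ an invariant, and the normalization $0\le i(p)\le\frac{m-n+1}{2}$ picks out the smaller member.

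As for comparison with the paper: the paper does not prove this proposition at all. It is stated as a background fact, and immediately after it the reader is referred to \cite{golubitskyguillemin} for ``fundamental stuffs and properties on fold maps''. So there is no proof in the paper to compare against; your write-up supplies precisely the classical argument one finds in such references.
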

 A fold map is {\it stable} if and only if the restriction map has only normal crossings as self-crossings.
For fundamental stuffs and properties on fold maps, see \cite{golubitskyguillemin} for example. For differential topological theory and advanced related results, see \cite{saeki} for example.

The following class gives explicit examples of fold maps having simple structures.

\begin{Def}
A fold map is said to be {\it special generic} if any singular point $p$ is of index $0$. 
\end{Def}
For example, Morse functions with just two singular points, characterizing spheres topologically (except $4$-dimensional exotic spheres, being undiscovered), and canonical projections of unit spheres are stable and special generic maps such that the singular value sets are embedded spheres. 

As an advanced study, every homotopy sphere of dimension $m>1$ not being an exotic $4$-dimensional sphere admits a special generic stable map into the plane whose singular value set is an embedded circle. 
A homotopy sphere of dimension $m>3$ admitting a special generic map into ${\mathbb{R}}^n$ ($n=m-3,m-2,m-1$) is diffeomorphic to $S^m$. 

\begin{Fact}[\cite{saeki2}] 
\label{fact:1}
A manifold of dimension $m>1$ admits a special generic map into ${\mathbb{R}}^n$ satisfying the relation $m>n \geq 1$ if and only if $M$ is obtained by gluing the following two compact manifolds on the boundaries by a bundle isomorphism between
 the bundles whose fibers are $S^{m-n}$ defined on the boundaries in canonical ways. 
\begin{enumerate}
\item The total space of a linear $D^{m-n+1}$-bundle over the boundary of a compact $n$-dimensional manifold $P$ immersed into ${\mathbb{R}}^n$. 
\item The total space of a bundle over $P$ whose fiber is $S^{m-n}$.
\end{enumerate}
Thus, the Reeb space of a special generic map into an Euclidean space is a smooth  compact manifold $P$, immersed into the target Euclidean space. FIGURE \ref{fig:1} shows the image and the structure of a special generic map into an Euclidean space. 
\begin{figure}
\label{fig:1}
\includegraphics[width=50mm]{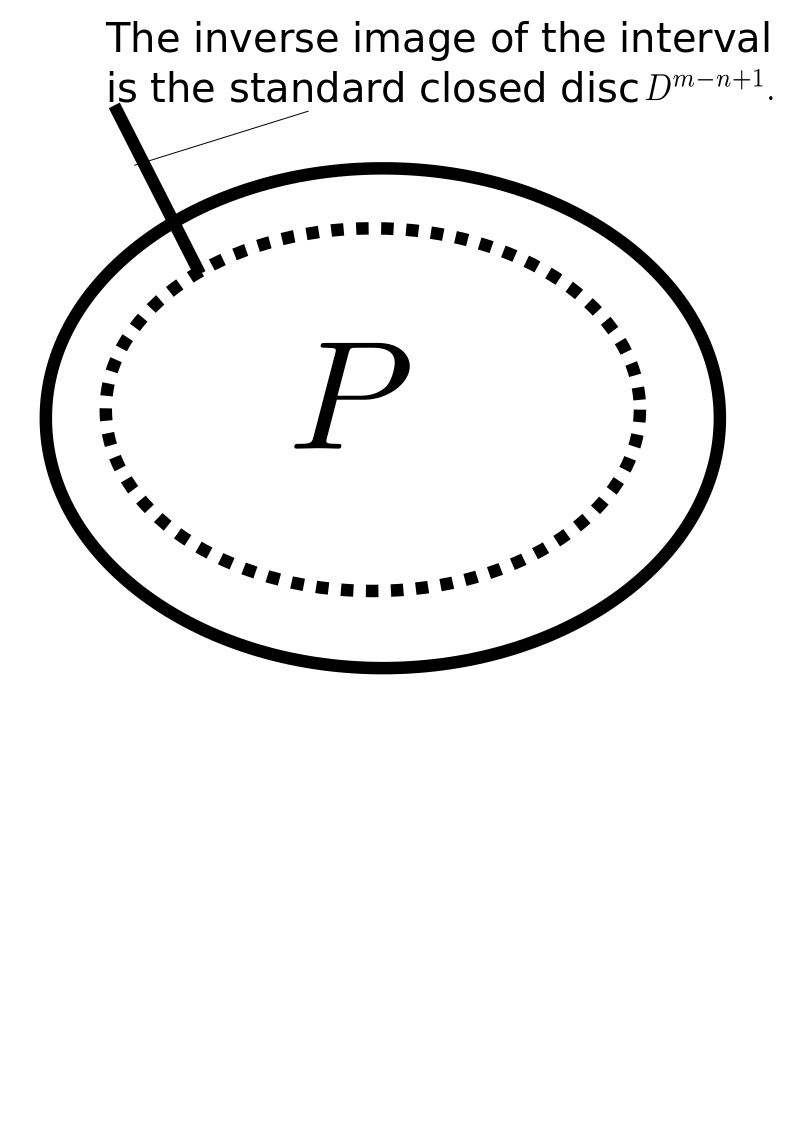}
\caption{The image of a special generic map into ${\mathbb{R}}^n$.}
\end{figure}
\end{Fact}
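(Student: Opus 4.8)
We sketch a proof; see \cite{saeki2} for the original argument. The plan is to prove both implications of the equivalence and, in doing so, to identify the manifold $P$ of the statement with the Reeb space of the map.

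Assume first that $f\colon M\to\mathbb{R}^n$ is special generic; then $M$ is closed, since the manifold produced in the conclusion is closed. By Definition~\ref{def:1} and the subsequent Proposition, $S(f)$ is a closed $(n-1)$-dimensional submanifold of $M$, and near each $p\in S(f)$ the map has the form $(x_1,\dots,x_m)\mapsto(x_1,\dots,x_{n-1},\sum_{k=n}^{m}x_k^{2})$. I would first check that the Reeb space $W_f$ — the quotient of $M$ by the relation of lying in one connected component of a fibre, with quotient map $q_f\colon M\to W_f$ and induced $\bar f\colon W_f\to\mathbb{R}^n$ satisfying $f=\bar f\circ q_f$ — is a smooth compact $n$-manifold with boundary on which $\bar f$ is an immersion. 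Over the regular value set $f$ is a submersion with compact fibres, so locally $M$ is a product $F\times B$ over a ball $B$ and $q_f$ is the projection onto $B$ on each component of $F$; over a singular value the normal form shows the collapsing fibre component is a small round $(m-n)$-sphere degenerating to a point, so $W_f$ is locally a half-space with boundary $q_f(S(f))$. Hence $q_f|_{S(f)}$ is a diffeomorphism onto $\partial W_f$; each component of $W_f$ has non-empty boundary, since a closed $n$-manifold does not immerse into $\mathbb{R}^n$, so the regular fibres over the interior are all $S^{m-n}$ (their topology is constant and equals that of the small spheres near $\partial W_f$). Put $P:=W_f$.

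Now cut $P$ along a collar $c\colon\partial P\times[0,1)\hookrightarrow P$ of $\partial P$ and set $\bar U=c(\partial P\times[0,\tfrac12])$. Over $P\setminus\operatorname{int}\bar U$, which lies in $\operatorname{int}P$, the map $q_f$ is a locally trivial $S^{m-n}$-bundle, so $q_f^{-1}(P\setminus\operatorname{int}\bar U)$ is the total space of an $S^{m-n}$-bundle over $P\setminus\operatorname{int}\bar U\cong P$, giving piece~(2). Over $\bar U$, using the fold normal form fibrewise — the preimage of $c(\{b\}\times[0,\tfrac12])$ is the image of a standard fold $D^{m-n+1}\to\mathbb{R}_{\geq 0}$, whose symmetry group is $O(m-n+1)$ — one patches these local models along $S(f)$ to identify $q_f^{-1}(\bar U)$ with the total space of a \emph{linear} $D^{m-n+1}$-bundle over $\partial P$, giving piece~(1). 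The two pieces agree along $q_f^{-1}(c(\partial P\times\{\tfrac12\}))$, which is canonically the boundary $S^{m-n}$-bundle of piece~(1) and the restriction of piece~(2)'s bundle over the boundary; this is the asserted bundle isomorphism, and $\bar f$ exhibits $P$ as the required immersed compact $n$-manifold, which is simultaneously the Reeb space.

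Conversely, given the data, write $P=P_1\cup_{\partial}(\partial P\times[0,1])$ with $P_1\cong P$ attached to the collar along $\partial P_1=\partial P\times\{1\}$, and let $\iota\colon P\looparrowright\mathbb{R}^n$ be the given immersion. Equip the linear $D^{m-n+1}$-bundle $E_D\to\partial P$ with a fibre metric from its linear structure, rescaled so fibres have radius $1$; let $E_S\to P_1$ be the $S^{m-n}$-bundle and $\phi$ the bundle isomorphism between $\partial E_D$ and $E_S|_{\partial P_1}$, taken to cover the canonical diffeomorphism $\partial P\cong\partial P_1$. Form $M:=E_D\cup_{\phi}E_S$, set $f|_{E_S}:=\iota|_{P_1}\circ\pi_{E_S}$, and on a vector $v$ in the fibre of $E_D$ over $b$ set $f(v):=\iota\bigl(c(b,\|v\|^{2})\bigr)$. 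On $\partial E_D$ this equals $\iota$ of the point $(b,1)$, which matches $f|_{E_S}\circ\phi$, so $f$ is well defined; one checks it is smooth across the gluing (the fold function $\|v\|^{2}$ meshes with the collar coordinate), that its singular set is exactly the zero section of $E_D$ with the index-$0$ fold normal form there, and that it is a submersion elsewhere, so $f$ is special generic. Since $q_f$ collapses the sphere fibres, its quotient is $P_1\cup(\partial P\times[0,1])=P$ with Reeb map $\iota$. The crux is the step in the first implication that upgrades the pointwise fold normal form along $S(f)$ to a genuine \emph{linear} $D^{m-n+1}$-bundle on a neighbourhood of $S(f)$ — a tubular-neighbourhood theorem along the submanifold $S(f)$ reducing the normal data of the collapsing spheres to structure group $O(m-n+1)$ — together with the parallel verification that $W_f$ is smooth there; compactness of $M$ is what allows passing from the local models to a finite patching, while the smoothness, genericity, and bundle-theoretic checks in the converse are routine.
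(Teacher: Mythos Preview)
The paper does not prove this statement at all: it is recorded as a \emph{Fact} with a citation to \cite{saeki2} and used as background, so there is no in-paper proof to compare against. Your sketch is therefore not competing with anything in this paper; it is a reconstruction of Saeki's original argument.

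That said, as a reconstruction it is essentially sound and follows the standard route. The forward direction---showing $W_f$ is a compact $n$-manifold with boundary $q_f(S(f))$, that $\bar f$ is an immersion, and then splitting $M$ along the preimage of a collar into a linear $D^{m-n+1}$-bundle over $\partial P$ and an $S^{m-n}$-bundle over $P$---is exactly how this is done in the literature. Your observation that each component of $W_f$ must have non-empty boundary (since a closed $n$-manifold cannot immerse into $\mathbb{R}^n$) is the right mechanism for forcing all interior fibres to be spheres. The one place you correctly flag as delicate---upgrading the pointwise fold normal form to a global \emph{linear} disc-bundle structure near $S(f)$---is indeed the nontrivial step; it goes through because the fold model has $O(m-n+1)$ symmetry and a tubular-neighbourhood argument along $S(f)$ reduces the structure group accordingly. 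The converse construction is routine and your version is fine, though in practice one usually smooths the gluing more carefully than ``$\|v\|^2$ meshes with the collar coordinate'' suggests (compose with a suitable bump so that the map is $C^\infty$ across $\partial E_D$, not merely continuous with matching first derivatives).
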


\subsection{Reeb spaces}
In studying such maps, {\it Reeb spaces} (\cite{reeb}) are fundamental and important. The {\it Reeb} space of a map $c:X \rightarrow Y$ is the space of all connected components of inverse images of $c$ and denoted by $W_c$. We denote the quotient map onto $W_c$ by $q_c:X \rightarrow W_c$ and a uniquely defined map by $\bar{f}$ satisfying the relation $f=\bar{f} \circ q_f$. For example, this is a graph if $c$ is a function whose singular value set is finite, a polyhedron of dimension equal to the dimension of the target manifold if it is a fold map and in considerable cases such as for stable maps, this holds (\cite{shiota}).

\subsection{The existence of fold maps and construction of explicit fold maps}

Stable Morse functions exist densely on any closed manifold. It is also well-known that a closed manifold of dimension larger than $1$ admits a (stable) fold map into the plane if and only if the Euler number is even.
Existence of fold maps into general Euclidean spaces have been studied since Eliashberg published \cite{eliashberg} and \cite{eliashberg2} etc.. Thus the following is a fundamental and important problem.  

\begin{Prob*}
Can we construct explicit fold maps and investigate manifolds admitting them?
\end{Prob*}

Special generic maps are maps having simple differential topological structures and the class of special generic maps give various explicit fold maps. However, the condition is too strong and manifolds admitting such maps are restricted.
Thus fold maps satisfying milder conditions are important. As a simplest class, we introduce {\it round} fold maps introduced by the author in \cite{kitazawa}, \cite{kitazawa2} and \cite{kitazawa3}.
 
We define an equivalence relations on the class of all smooth maps to define {\it round} fold maps.
Two smooth maps $c_1:X_1 \rightarrow Y_1$ and $c_2:X_2 \rightarrow Y_2$ are said to be {\it $C^{\infty}$ equivalent} if
a pair $({\phi}_X,{\phi}_Y)$ of diffeomorphisms exists satisfying the relation ${\phi}_Y \circ c_1=c_2 \circ {\phi}_X$. We also say that $c_1$ is {\it $C^{\infty}$ equivalent to} $c_2$.

\begin{Def}[\cite{kitazawa2}, \cite{kitazawa3}, \cite{kitazawa4} etc.]
\label{def:3}
$f:M \rightarrow {\mathbb{R}}^n$ is said to be a {\it round} fold map if either of the following hold.
\begin{enumerate}
\item $n=1$ holds and $f$ is $C^{\infty}$ equivalent to
 a fold map $f_0:M_0 \rightarrow {\mathbb{R}}$ on a closed manifold $M_0$ such that the following three hold.

\begin{enumerate}
\item $0$ is a regular value of $f_0$.  
\item Two Morse functions defined as ${f_0} {\mid}_{{f_0}^{-1}(-\infty,0]}$ and ${f_0} {\mid}_{{f_0}^{-1}[0,+\infty)}$ are $C^{\infty}$ equivalent.
\item $f_0(S(f_0))$ is the set of all integers whose absolute values are positive and not larger than a positive integer.  
\end{enumerate}
\item $n \geq 2$ holds and $f$ is $C^{\infty}$ equivalent to
 a fold map $f_0:M_0 \rightarrow {\mathbb{R}}^n$ on a closed manifold $M_0$ such that the following three hold.

\begin{enumerate}
\item The singular set $S(f_0)$ is a disjoint union of standard spheres whose dimensions are $n-1$ and consists of $l >0$ connected components.
\item The restriction map $f_0 {\mid}_{S(f_0)}$ is an embedding.
\item Let ${D^n}_r:=\{(x_1,\cdots,x_n) \in {\mathbb{R}}^n \mid {\sum}_{k=1}^{n}{x_k}^2 \leq r \}$. Then, the relation $f_0(S(f_0))={\sqcup}_{k=1}^{l} \partial {D^n}_k$ holds.  
\end{enumerate}
\end{enumerate}
\end{Def}

\begin{Ex}
\label{ex:1}
 (\cite{kitazawa},\cite{kitazawa2},\cite{kitazawa4} etc.)
Let $m$ and $n$ be positive integers satisfying the relation $m>n$.
Let $M$ be an $m$-dimensional closed and connected manifold and $\Sigma$ be an ($m-n$)-dimensional almost-sphere. Then the following are equivalent.
\begin{enumerate}
\item $M$ is the total space of a smooth bundle over $S^n$ whose fiber is $\Sigma$.
\item 
\begin{figure}
\label{fig:2}
\includegraphics[width=40mm]{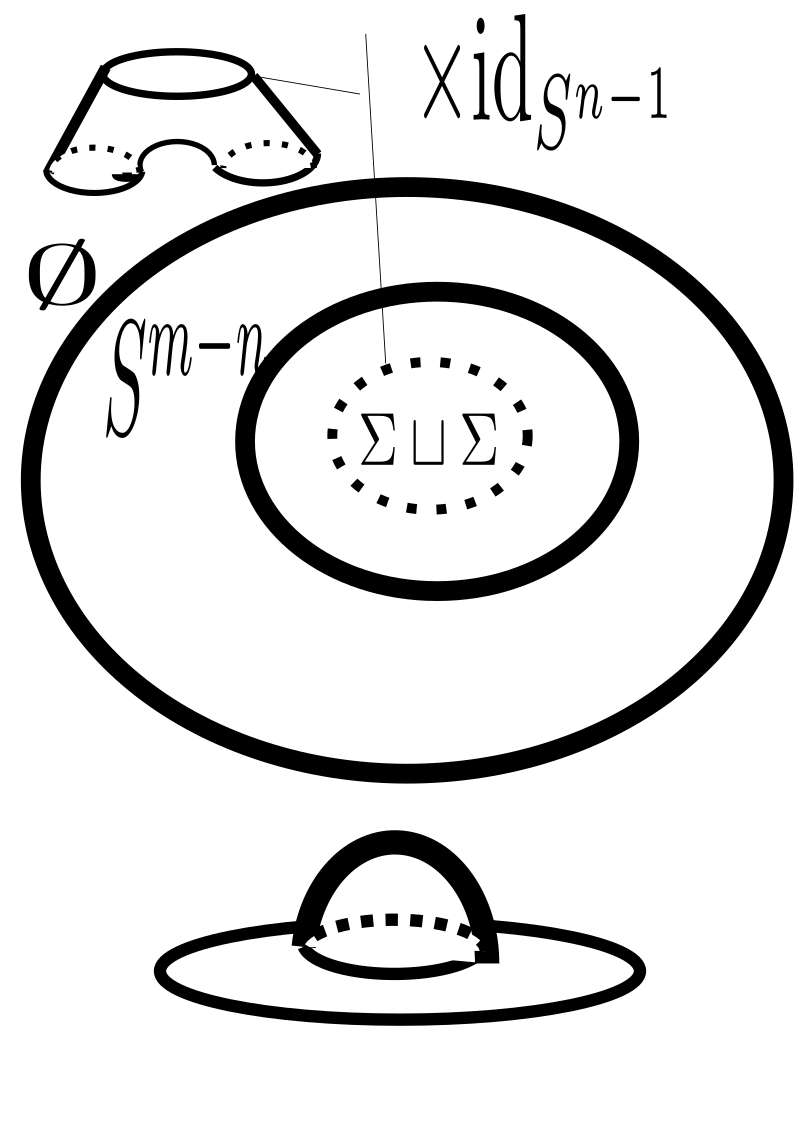}
\caption{The image of a round fold map of Example \ref{ex:1} and the Reeb space. $\emptyset$ and manifolds represent inverse images, thick curves represent the singular value set and the pair of pants represent a Morse function on the manifold obtained by removing the interior of an ($m-n+1$)-dimensional standard closed disc in the interior of the cylinder $\Sigma \times [-1,1]$.}
%% around a singular value of a singular point obtained by an M-bubbling operation to a map whose generating polyhedron is in
% a connected component of the regular value set such that inverse images of points there have connected components diffeomorphic to standard spheres and which exchanges a connected component such that the restriction maps to the component into %target spaces give bundles whose fibers are standard spheres}
\end{figure}
Either of the following holds (see FIGURE \ref{fig:2}: only the latter case is presented).
\begin{enumerate}
\item $n=1$ and $M$ admits a round fold map with just four singular points. The inverse images of regular values in the five connected components of the regular value set are $\emptyset$, $S^{m-n}$, $\Sigma \sqcup \Sigma$, $S^{m-n}$ and $\emptyset$, respectively.
\item $n \geq 2$ and $M$ admits a round fold map such that the singular set is the disjoint union of two copies of $S^{n-1}$. The inverse images of regular values in the three connected components of the regular value set are $\emptyset$, $S^{m-n}$ and $\Sigma \sqcup \Sigma$, respectively. Furthermore, on the complement of
the interior of an $n$-dimensional standard closed disc smoothly embedded in the connected component of the center of the regular value set and its inverse image, the map is $C^{\infty}$ equivalent to a product of a Morse function with just two singular points on the cylinder $\Sigma \times [-1,1]$ such that the boundary coincides with the inverse image of the minimum and the identity map ${\rm id}_{S^{n-1}}$.
\end{enumerate}
\end{enumerate}

\end{Ex}

\subsection{Surgery operation to construct explicit fold maps more}

To obtain maps other than the presented maps explicitly and systematically, the author has introduced {\it {\rm (}normal{\rm )} bubbling operations} as surgery operations to stable fold maps. We remove the interior of a small regular neighborhood of a compact bouquet of closed, connected and orientable submanifolds (resp. a small closed tubular neighborhood of a closed and connected and orientable submanifold) in the regular value of an original fold map and a connected component of its inverse image and after that, we attach a new map such that the singular value set is regarded as the boundary of the regular (resp. closed tubular) neighborhood and in the interior of the target space and that the map obtained by the restriction to the singular set is an embedding. In this way we obtain a new fold map.
These operations are generalizations of {\it bubbling surgeries} by Kobayashi \cite{kobayashi3}: Kobayashi`s surgery is the case where the bouquet is a point. 

\begin{Ex}
\label{ex:2}
The map presented in FIGURE \ref{fig:2} is obtained by a bubbling surgery to a stable special generic map whose singular set is a standard sphere.
\end{Ex}

\subsection{Homological properties of maps obtained by surgery operations and main works and the content of this paper}
The author constructed maps by finite iterations of such surgery operations with setting and challenging the following problem.
   
\begin{Prob*}
Let $R$ be a principal ideal domain and for a fold map $f:M \rightarrow N$ from a closed and connected manifold of dimension $m$ into an manifold without boundary of dimension $n$ satisfying the relation $m>n \geq 1$, let $f^{\prime}:{M}^{\prime} \rightarrow N$ be a fold map obtained by a finite iteration of normal bubbling operations to $f$. Then for any integer $0 \leq j \leq n$, we can set a finitely generated module $G_j$ over $R$ so
  that $G_0$ is a trivial $R$-module and that $G_n$ is not
 a trivial $R$-module and the module $H_j(W_{{f}^{\prime}};R)$ is isomorphic to $H_j(W_f;R) \oplus G_j$. 

 Conversely, for a suitable family $\{G_j\}_{j=0}^{n}$ of modules, can we construct
 a suitable map $f^{\prime}$ satisfying the condition by a finite iteration of bubbling operations starting from $f$?
\end{Prob*}
Example \ref{ex:1} or \ref{ex:2} accounts for the case where $G_j$ is zero for $1 \leq j \leq n-1$ and isomorphic to $\mathbb{Z}$ for $j=n$. In \cite{kitazawa5}, we have shown that the modules are flexible by explicit construction. They will be explicitly presented in Propositions \ref{prop:3}, \ref{prop:4} and \ref{prop:5}. We have also seen several restrictions.
In \cite{kitazawa6}, we show more explicit restrictions. More precisely, we have investigated cases where the numbers of non-trivial $G_j$ are small. In addition, in showing such facts, we have found new sufficient conditions to
 obtain $f^{\prime}$ from the original map $f$ which we could not find in \cite{kitazawa5}.

In this paper, as a related study, we investigate effects the torsion subgroups of the groups $G_j$ give to $\{G_j\}$.

 Constructing fold maps having Reeb spaces whose homology groups have non-trivial torsion subgroups, or obtaining manifolds whose homology groups have non-trivial torsion subgroups admitting explicit fold maps seems to be difficult in general. In \cite{nishioka}, Nishioka has studied homology groups of manifolds admitting special generic maps into Euclidean spaces precisely, explicitly shown that torsion subgroups of homology groups of suitable degrees are zero if constraints on suitable parts of homology groups are posed and as application, for simply connected and closed manifolds of dimension $5$, completely classified in  \cite{barden}, has determined whether the manifolds admit special generic maps or not completely.  
In \cite{kitazawa5} and \cite{kitazawa6}, cases where $G_j$ have torsion subgroups have been also studied. For example, several suitable maps have been constructed and homology groups of their Reeb spaces have been studied. 
However, these studies give only several small families of desired maps and the Reeb spaces.   

In this stream, as a general study, general restrictions on Reeb spaces whose homology groups have non-trivial torsion subgroups will be explicitly studied.
We apply methods which have been used and which have not been used in \cite{kitazawa5} (\cite{kitazawa6}) such as several methods in the fundamental theory of commutative groups.   

The content of the paper is as the following. 
In the next section, we introduce and review {\it bubbling operations} based on \cite{kitazawa5}. 
The next section is for main works of the present paper. We consider several explicit situations and investigate maps and Reeb spaces. We present several patterns such that the torsion subgroups of homology groups of Reeb spaces may not be trivial explicitly. Last, we introduce the following as Fact \ref{fact:3}: for a stable fold map such that inverse images of regular values are disjoint unions of almost-spheres satisfying appropriate differential topological conditions, homology groups and homotopy groups of the source manifold and the Reeb space whose degrees are smaller than the difference of the dimensions of the source and the target manifolds are isomorphic. Special generic maps, maps presented in Example \ref{ex:1} etc. satisfy the assumption of this fact. This gives clues to know homology and homotopy groups of the source manifolds from the Reeb spaces. 

\subsection{Acknowledgement}
\thanks{
% The author has decided to study about the result of the present paper and has obtained result motivated by a comment of my host Professor Osamu Saeki when I gave a poster presentation on \cite{kitazawa5} to him: in fact the author once considered related problems and obtained partial results. Owing to him, the author could start more precise studies and submit the results to the journal.
The author is a member of the project and supported by the project Grant-in-Aid for Scientific Research (S) (17H06128 Principal Investigator: Osamu Saeki)
"Innovative research of geometric topology and singularities of differentiable mappings"

( 
https://kaken.nii.ac.jp/en/grant/KAKENHI-PROJECT-17H06128/
).}

\section{bubbling operations}

\begin{Def}[\cite{kitazawa5}]
\label{def:4}
Let $f:M \rightarrow N$ be a fold map from a closed and connected manifold of dimension $m$ into an manifold without boundary of dimension $n$ satisfying the relation $m>n \geq 1$, let $R$ be a connected component of the regular
 value set $N-f(S(f))$. Let $S$ be a bouquet of a finite number of connected and orientable closed submanifolds (resp. a connected and orientable closed submanifold) of
 $R$ and $N(S)$, ${N(S)}_i$ and ${N(S)}_o$ be small regular neighborhoods
 of $S$ in $R$ such that the relations ${N(S)}_i \subset N(S) \subset {N(S)}_o$, ${N(S)}_i \subset {\rm Int }N(S)$ and $N(S) \subset {\rm Int} {N(S)}_o$ hold. Let
 $f^{-1}({N(S)}_o)$ have a connected component $P$ such that $f {\mid}_{P}$
 makes $P$ a bundle over ${N(S)}_o$.

Furthermore we assume that there exist an $m$-dimensional closed manifold $M^{\prime}$ and
 a fold map $f^{\prime}:M^{\prime} \rightarrow {\mathbb{R}}^n$
 satisfying the following.
\begin{enumerate}
\item $M-{\rm Int} P$ is a compact submanifold (with non-empty boundary) of $M^{\prime}$ of dimension $m$.
\item $f {\mid}_{M-{\rm Int} P}={f}^{\prime} {\mid}_{M-{\rm Int} P}$ holds.
\item ${f}^{\prime}(S({f}^{\prime}))$ is the disjoint union of $f(S(f))$ and $\partial N(S)$.
\item $(M^{\prime}-(M-P)) \bigcap {{f}^{\prime}}^{-1}({N(S)}_i)$ is empty or ${{f}^{\prime}} {\mid}_{(M^{\prime}-(M-P)) \bigcap {{f}^{\prime}}^{-1}({N(S)}_i)}$ makes $(M^{\prime}-(M-P)) \bigcap {f^{\prime}}^{-1}({N(S)}_i)$ a bundle over ${N(S)}_i$.
\end{enumerate}
 These assumptions enable us to consider the procedure of constructing $f^{\prime}$ from $f$ and we
 call it a {\it {\rm (}normal{\rm )} bubbling operation} to $f$ and, ${\bar{f}}^{-1}(S) \bigcap q_f(P)$, which is homeomorphic
 to $S$, the {\it generating polyhedron {\rm (}manifold{\rm )}} of the bubbling operation. 
 
\begin{enumerate}
\item Let us suppose the following additional conditions.
\begin{enumerate}
\item
 ${{f}^{\prime}} {\mid}_{(M^{\prime}-(M-P)) \bigcap {f^{\prime}}^{-1}({N(S)}_i)}$ makes $(M^{\prime}-(M-P)) \bigcap {f^{\prime}}^{-1}({N(S)}_i)$ the disjoint union of two bundles over ${N(S)}_i$, then the procedure is called a {\it normal M-bubbling operation} to $f$. Note that the original inverse image having no singular points is represented as a connected sum of new two manifolds appearing as fibers of the two bundles.
\item ${{f}^{\prime}} {\mid}_{(M^{\prime}-(M-P)) \bigcap {f^{\prime}}^{-1}({N(S)}_i)}$ makes $(M^{\prime}-(M-P)) \bigcap {f^{\prime}}^{-1}({N(S)}_i)$ the disjoint union of two bundles over ${N(S)}_i$ and the fiber of one of the bundles is an almost-sphere, then the procedure is called a {\it normal S-bubbling operation} to $f$. Note that this operation is also a normal M-bubbling operation.
\end{enumerate}
\item As extra assumptions, if the following two hold, then the procedure is called a {\it trivial bubbling operation}. 
\begin{enumerate}
\item The map ${f^{\prime}} {\mid}_{{f^{\prime}}^{-1}({N(S)}_o-{\rm Int} {N(S)}_i)}$ is $C^{\infty}$ equivalent to the product of a Morse function with just one singular point and the identity
 map ${\rm id} {\mid}_{\partial N(S)}$.
\item The map ${f^{\prime}} {\mid}_{{{f}^{\prime}}^{-1}({N(S)}_i)}$ makes $(M^{\prime}-(M-P)) \bigcap {f^{\prime}}^{-1}({N(S)}_i)$ a trivial bundle over ${N(S)}_i$.
\end{enumerate}
\end{enumerate}
\end{Def}
\begin{Fact}[\cite{kitazawa2}, \cite{kitazawa4} etc.]
\label{fact:2}
Let $m$ and $n$ be positive integers satisfying the relation $m \geq 2n$ and let $M$ be an $m$-dimensional closed and connected manifold. Then the following are equivalent.
\begin{enumerate}
\item $M$ is represented as a connected sum of $l>0$ manifolds regarded as the total spaces bundles whose fibers are $S^{m-n}$ over $S^n$.
\item $M$ admits a round fold map obtained by $l$-times trivial normal S-bubbling operations starting from a canonical projection of a unit sphere, more generally, a special generic map into the plane or higher dimensional versions of FIGURE \ref{fig:1} such that inverse images of regular values are disjoint unions of standard spheres.
\end{enumerate}
\end{Fact}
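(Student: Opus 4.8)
The plan is to prove both implications by induction on $l$, using Example~\ref{ex:1} and Example~\ref{ex:2} to dispose of the single–summand case and to pin down the local model of a trivial normal S-bubbling operation, and Fact~\ref{fact:1} to identify the pieces being glued together.

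First I would isolate the local model. The base of the construction in $(2)$ may be taken to be the canonical projection $f_0:S^m\to{\mathbb{R}}^n$: its Reeb space is $D^n$ and over the interior the inverse image of a regular value is the standard sphere $S^{m-n}$. Carrying out one trivial normal S-bubbling operation over a point $y_0$ of the regular value set replaces the standardly embedded $P\cong S^{m-n}\times D^n$ lying over a small ball about $y_0$ by a new piece; the conditions defining a trivial normal S-bubbling operation in Definition~\ref{def:4} entail that this piece is assembled, over the collar $N(S)_o-{\rm Int}\,N(S)_i$, from a Morse function with a single critical point times ${\rm id}_{\partial N(S)}={\rm id}_{S^{n-1}}$, and over $N(S)_i$ from a trivial $S^{m-n}$-bundle with two–component fibre $S^{m-n}\sqcup\Sigma$. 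Since $\Sigma$ is obtained from $S^{m-n}$ by a single elementary handle attachment in that cobordism that keeps it standard (this is one point where $m\geq 2n$ is used), one has $\Sigma=S^{m-n}$, and the resulting map is exactly the round fold map of Example~\ref{ex:1} with $\Sigma=S^{m-n}$. The only remaining freedom is the gluing diffeomorphism along $\partial P\cong S^{m-n}\times S^{n-1}$, which by compatibility with the projections to $S^{n-1}$ is a bundle isomorphism of trivial $S^{m-n}$-bundles over $S^{n-1}$; letting it range over $\pi_{n-1}$ of the diffeomorphism group of $S^{m-n}$ realises exactly the total spaces of all $S^{m-n}$-bundles over $S^n$ as the summand produced. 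Since $S^m\#N\cong N$, this yields the case $l=1$ of both implications, consistently with Example~\ref{ex:2}.

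For the inductive step of $(1)\Rightarrow(2)$, given $M=N_1\#\cdots\#N_l$ I would write $M=M'\#N_l$ with $M'=N_1\#\cdots\#N_{l-1}$. By induction $M'$ carries a round fold map $g$ built by $l-1$ trivial normal S-bubbling operations, so its singular value set is a union of concentric spheres and its innermost regular region has inverse image a single standard $S^{m-n}$; performing the local model above over a point of that region, placing the new fold sphere just inside the innermost one and rescaling the target radii to $1,2,\dots$ so that Definition~\ref{def:3}(2)(c) is restored, produces a round fold map on $M'\#N_l=M$ obtained by $l$ operations. For $(2)\Rightarrow(1)$, I would induct downward on the number of operations: by the local–model analysis, undoing the last trivial normal S-bubbling operation changes the source from $M$ to $M'$ with $M\cong M'\#N_l$ for some $S^{m-n}$-bundle $N_l$ over $S^n$; iterating down to the base map, whose source is $S^m$, gives $M\cong N_1\#\cdots\#N_l$ with $l>0$.

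The main obstacle I expect is the precise identification of the diffeomorphism type contributed by one trivial normal S-bubbling operation — that it is exactly connected sum with the total space of an $S^{m-n}$-bundle over $S^n$, that every such bundle occurs, and that no hidden twisting lurks in the collar region or in the re–gluing. Here Fact~\ref{fact:1} supplies the model for the piece over $N(S)_i$ together with the index-$0$ folds that fill it in, and the hypothesis $m\geq 2n$ is what keeps every inverse image a disjoint union of standard spheres, keeps the round–fold–map axioms of Definition~\ref{def:3} in force at each step, and makes the concentric–sphere normalisation available. The remaining bookkeeping — checking Definition~\ref{def:3} after each operation and carrying out the radial rescalings — is routine.
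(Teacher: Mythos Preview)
The paper does not actually supply a proof of this statement. It is recorded as a \emph{Fact} with citations to \cite{kitazawa2} and \cite{kitazawa4}, accompanied only by Figure~\ref{fig:3} and the one-line remark that ``this can be regarded as an extension of Example~\ref{ex:1} where the relation $m\geq 2n$ holds with $\Sigma=S^{m-n}$.'' So there is nothing in the present paper to compare your argument against; the proof lives in the cited earlier work.

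That said, your inductive scheme---treating a single trivial normal S-bubbling operation as connected sum with the total space of an $S^{m-n}$-bundle over $S^n$, anchoring the base case in Example~\ref{ex:1}/Example~\ref{ex:2}, and then iterating---is exactly the picture the paper's remark and figure are meant to evoke, and it is the line of argument pursued in \cite{kitazawa2} and \cite{kitazawa4}. The one place where your sketch is thin is the sentence ``Since $\Sigma$ is obtained from $S^{m-n}$ by a single elementary handle attachment in that cobordism that keeps it standard (this is one point where $m\geq 2n$ is used).'' You should make precise why the two new fibre components are \emph{standard} spheres rather than merely almost-spheres, and why the gluing along $S^{m-n}\times S^{n-1}$ realises every $S^{m-n}$-bundle over $S^n$; both points are handled in the cited references and are where the hypothesis $m\geq 2n$ genuinely enters, but in your write-up they are asserted rather than argued.
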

\begin{figure}
\label{fig:3}
\includegraphics[width=40mm]{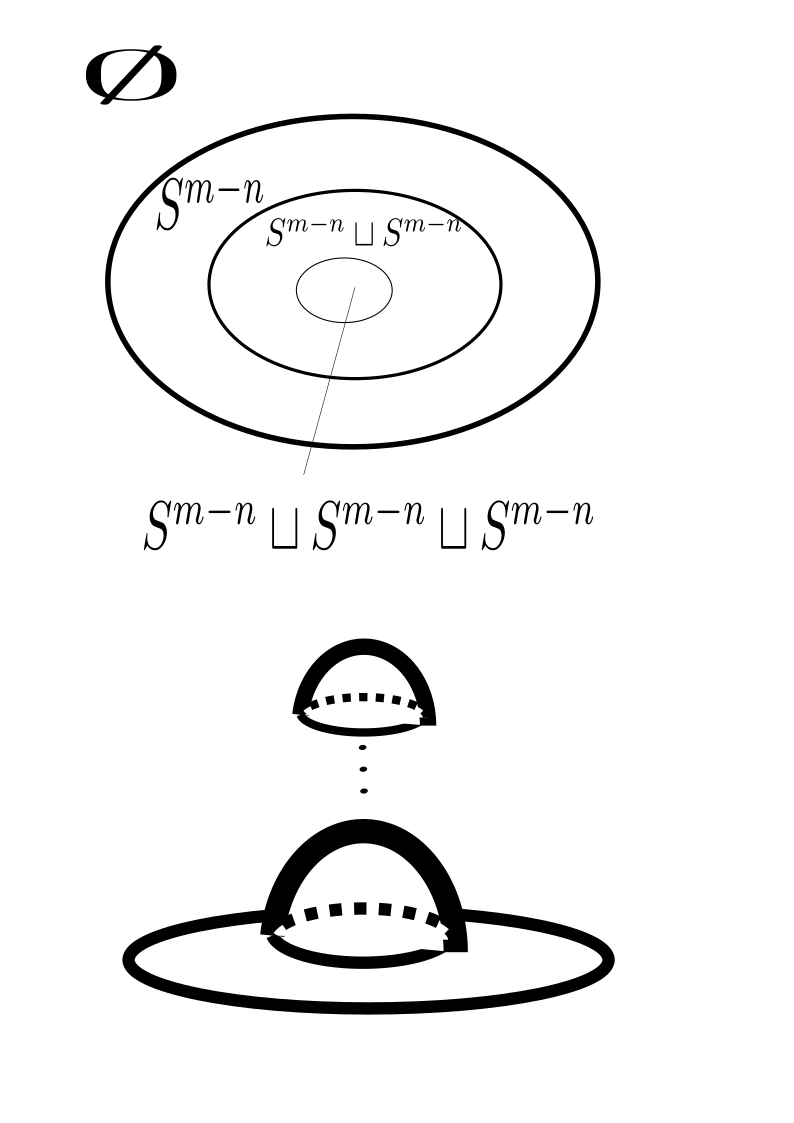}
\caption{The image of a round fold map of Fact \ref{fact:2} and the Reeb space: $\emptyset$ and manifolds represent inverse images, thick curves represent the singular value set and the Reeb space is obtained by piling discs one after another by identifying the boundaries with spheres in the interiors of the discs on the tops starting from the disc in the bottom.}
\label{fig:3}% around a singular value of a singular point obtained by an M-bubbling operation to a map whose generating polyhedron is in
% a connected component of the regular value set such that inverse images of points there have connected components diffeomorphic to standard spheres and which exchanges a connected component such that the restriction maps to the component into %target spaces give bundles whose fibers are standard spheres}
\end{figure}
See FIGURE \ref{fig:3}. Note that this can be regarded as an extension of Example \ref{ex:1} where the relation $m \geq 2n$ holds with $\Sigma=S^{m-n}$.
%\begin{figure} 
%\includegraphics[width=30mm]{bundleconnected.eps}
%\caption{The image of a map of Fact \ref{fact:2} (thick lines represent the singular value set and manifolds and the empty %set $\emptyset$ represent inverse images of regular values).}
%\label{fig:5}
% around a singular value of a singular point obtained by an M-bubbling operation to a map whose generating polyhedron is in
% a connected component of the regular value set such that inverse images of points there have connected components diffeomorphic to standard spheres and which exchanges a connected component such that the restriction maps to the component into %target spaces give bundles whose fibers are standard spheres}
%\end{figure} 

In this paper, we only use the following trivial M-bubbling operations essentially as in \cite{kitazawa5}.  
\begin{Ex}
\label{ex:3}
Let $f:M \rightarrow N$ be a fold map from a closed and connected manifold of dimension $m$ into an manifold without boundary of dimension $n$ satisfying the relation $m>n \geq 1$, let $R$ be a connected component of the regular
 value set $N-f(S(f))$. Let $S$ be a connected and orientable closed submanifold of
 $R$ such that there exists a connected component $P$ of $f^{-1}(S)$ and $f {\mid}_{P}$ gives a trivial bundle (take $S$ in an open disc in $R$ for example). Then we can consider a trivial normal M-bubbling operation whose generating manifold is $q_f(P)$ so that the pair of the resulting new two connected components of an inverse image having no singular points or the pair of the fibers of the resulting two bundles explained in Definition \ref{def:1} can be any pair of manifolds by a connected sum of which we obtain the original connected component of an inverse image having no singular points. Note that a trivial S-bubbling operation is a specific case.   
\end{Ex}

We present several results of \cite{kitazawa5} (\cite{kitazawa6}) with sketches of proofs. 
For a finitely generated module $G$ over a principal ideal domain $R$, we denote the rank
 by ${\rm rank}_R G$.

\begin{Prop}[\cite{kitazawa5}]  
\label{prop:2}
Let $f:M \rightarrow N$ be a fold map from a closed and connected manifold of dimension $m$ into an manifold without boundary of dimension $n$ satisfying the relation $m>n \geq 1$, let $R$ be a connected component of the regular
 value set $N-f(S(f))$. Let $f^{\prime}:{M}^{\prime} \rightarrow N$ be a fold map obtained by an
 M-bubbling operation to $f$. Let $S$ be the generating polyhedron of the M-bubbling
 operation. Let $k$ be a positive integer and $S$ be represented as the bouquet of submanifolds $S_j$ where $j$ is an integer satisfying $1 \leq j \leq k$.  
% Let $R$ be a PID and we assume that all cycles but $0$-cycles of $H_{\ast}(S_j;R)$ vanish in $W_f$. 
Then, for any principal ideal domain $R$ and any integer $0\leq i<n$, we have

$$H_{i}(W_{{f}^{\prime}};R) \cong H_{i}(W_f;R) \oplus {\oplus}_{j=1}^{k} (H_{i-(n-{\dim} S_j)}(S_j;R))$$

and we also have $H_{n}(W_{{f}^{\prime}};R) \cong H_{n}(W_f;R) \oplus R$.
\end{Prop}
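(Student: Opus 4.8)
The plan is to show that the M-bubbling operation changes $W_f$ only inside a neighbourhood of the generating polyhedron $S$, in a way governed entirely by $H_\ast(N(S),\partial N(S))$, and then to evaluate that relative homology by duality. Since $f|_{P}$ is a bundle with connected fibre, $\bar f$ carries the sheet $q_f(P)$ of $W_f$ homeomorphically onto $N(S)_o$, so $W_f=W_0\cup_{\partial N(S)}D_S$, where $W_0:=\bar f^{-1}(N\setminus\mathrm{Int}\,N(S))$ is the part left fixed by the operation and $D_S\cong N(S)$ is the one sheet over $N(S)$, with $\partial D_S\cong\partial N(S)$. By Definition~\ref{def:4} the operation replaces this sheet by $\bar{f'}^{-1}(N(S))$, which over $\mathrm{Int}\,N(S)$ has two sheets --- the two bundles of the M-bubbling, extended outward --- merging along the new fold crease lying over $\partial N(S)$; hence $\bar{f'}^{-1}(N(S))$ is homeomorphic to the double $\mathrm{D}N(S)$ of $N(S)$ along its boundary, and $W_{f'}=W_0\cup_{\partial N(S)}\mathrm{D}N(S)$, glued along the doubling locus. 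Consequently $W_{f'}$ contains a copy of $W_f$, namely $W_0$ together with one of the two halves of $\mathrm{D}N(S)$, while the other half $B\cong N(S)$ is attached to this copy of $W_f$ along a copy $E\cong\partial N(S)$ of the boundary sitting inside $W_f$ as the boundary of a sub-copy of $N(S)$.

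Next I would use the long exact homology sequence of the pair $(W_{f'},W_f)$, with $W_f$ identified as above. Excising $W_0$ and then $E$ gives $H_i(W_{f'},W_f;R)\cong H_i(B,\partial B;R)\cong H_i(N(S),\partial N(S);R)$. The connecting homomorphism $H_i(W_{f'},W_f;R)\to H_{i-1}(W_f;R)$ has image inside that of $H_{i-1}(E;R)\to H_{i-1}(W_f;R)$, and the latter factors through $H_{i-1}(N(S);R)$ because $E$ is the boundary of a sub-copy of $N(S)$ in $W_f$; under the identifications the connecting homomorphism therefore becomes a composite of two consecutive maps in the long exact sequence of $(N(S),\partial N(S))$, hence vanishes. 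So there are short exact sequences $0\to H_i(W_f;R)\to H_i(W_{f'};R)\to H_i(N(S),\partial N(S);R)\to 0$, and they split: in the Mayer--Vietoris sequence of $W_{f'}=W_0\cup_{\partial N(S)}\mathrm{D}N(S)$ the doubling locus hits only one of the two $H_\ast(N(S))$-summands in the (Mayer--Vietoris) splitting $H_\ast(\mathrm{D}N(S);R)\cong H_\ast(N(S);R)\oplus H_\ast(N(S),\partial N(S);R)$, which supplies a section. Thus $H_i(W_{f'};R)\cong H_i(W_f;R)\oplus H_i(N(S),\partial N(S);R)$ for all $i$.

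Finally, $N(S)$ is a connected compact orientable $n$-manifold with boundary deformation retracting onto the bouquet $S=\bigcup_{j=1}^k S_j$, so Lefschetz duality gives $H_i(N(S),\partial N(S);R)\cong H^{n-i}(N(S);R)\cong H^{n-i}(S;R)$. For $0\le i<n$ the wedge axiom together with Poincar\'e duality on the closed orientable manifolds $S_j$ yields $H^{n-i}(S;R)\cong\bigoplus_{j=1}^k H^{n-i}(S_j;R)\cong\bigoplus_{j=1}^k H_{i-(n-\dim S_j)}(S_j;R)$, whereas for $i=n$ connectedness of $S$ gives $H^0(S;R)\cong R$; substituting into the isomorphism above gives the two displayed formulas. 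I expect the main obstacle to be the first step --- reading off from Definition~\ref{def:4} the exact homeomorphism type of $\bar{f'}^{-1}(N(S))$ (two copies of $N(S)$ glued along the boundary) and the way this piece is attached to $W_0$ --- together with the bookkeeping that keeps the duality isomorphisms valid over an arbitrary principal ideal domain (orientability of the relevant normal bundles) and that produces the splitting even when $H_i(N(S),\partial N(S);R)$ has torsion.
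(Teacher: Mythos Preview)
Your argument is correct and begins from the same geometric identification as the paper: the Reeb space $W_{f'}$ is obtained from $W_f$ by attaching the double of $N(S)$ along one of its two halves, which the paper phrases equivalently as attaching a connected sum of $S^{n-\dim S_j}$-bundles over $S_j$ (the double of a boundary connected sum of disc bundles). Where you diverge is in the homology computation. The paper exploits that each sphere bundle admits the zero section and therefore has the homology of the product $S_j\times S^{n-\dim S_j}$, reading off the summands $H_{i-(n-\dim S_j)}(S_j;R)$ directly from the K\"unneth/Gysin splitting; you instead use the long exact sequence of the pair $(W_{f'},W_f)$, excise to $(N(S),\partial N(S))$, kill the boundary map by the ``two consecutive maps'' trick, split via the retraction of the double onto one half, and then compute $H_\ast(N(S),\partial N(S);R)$ by Lefschetz duality on $N(S)$ followed by Poincar\'e duality on each $S_j$. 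Your route is more systematic and makes the splitting explicit (the paper only says ``we have the result''), and it works uniformly for any PID without needing to unpack the sphere-bundle structure; the paper's section argument is more geometric and, in effect, is what guarantees the orientability you flag as a concern. One cosmetic point: your $W_0$ should be $W_f$ minus the interior of the single sheet $q_f(P)\cap\bar f^{-1}(N(S))$, not all of $\bar f^{-1}(N\setminus\mathrm{Int}\,N(S))$, since other connected components of the preimage over $N(S)$ are untouched by the operation; this does not affect the rest of your argument.
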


\begin{proof}[Sketch of the proof]
For each $S_j$, we can take a small closed tubular neighborhood, regarded as
 the total space of a linear $D^{n-\dim S_j}$-bundle over $S_j$. By the definition of an M-bubbling operation, we can
 see that a small regular neighborhood of $S$ is represented as a boundary connected sum of the
 closed tubular neighborhoods. $W_{f^{\prime}}$ is regarded as a polyhedron obtained by attaching a manifold represented as a connected sum of total spaces
 of linear $S^{n-\dim S_j}$-bundles over $S_j$ ($1 \leq j \leq k$) by considering $D^{n-\dim S_j}$ in the beginning of
 this proof as a hemisphere of $S^{n-\dim S_j}$ and identifying the subspace obtained by restricting the
 space to fibers $D^{m-\dim S_j}$ with the original regular neighborhood. Note that the connected sum of total spaces of linear $S^{n-\dim S_j}$-bundles
 over $S_j$ ($1 \leq j \leq k$) is regard as a connected sum of product bundles in investigating the homology groups. In fact, the bundles admit sections, corresponding
 to the submanifolds $S_j$ and regarded as sections whose values are the points corresponding to the origin
 in the fiber $D^{n-\dim S_j} \subset S^{n-\dim S_j}$. 

From the observation on the relation of the topologies of $W_f$ and $W_{f^{\prime}}$, we have the result.
\end{proof}

\begin{Cor}[\cite{kitazawa5}]
\label{cor:1}
In Problem in the introduction, $G_{n-1}$ must be free.
\end{Cor}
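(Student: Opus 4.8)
The plan is to reduce the statement to a single bubbling operation and then read off the answer from Proposition~\ref{prop:2} in degree $n-1$. Write $f^{\prime}$ as the last term of a finite chain $f=f_0 \to f_1 \to \cdots \to f_N=f^{\prime}$ in which each $f_{t+1}$ is obtained from $f_t$ by one normal bubbling operation. If I can show that for every $t$ the module $H_{n-1}(W_{f_{t+1}};R)$ is isomorphic to $H_{n-1}(W_{f_t};R)$ plus a free direct summand, then by induction $H_{n-1}(W_{f^{\prime}};R) \cong H_{n-1}(W_f;R) \oplus F$ with $F$ a finite direct sum of free modules, hence free; comparing with the isomorphism $H_{n-1}(W_{f^{\prime}};R) \cong H_{n-1}(W_f;R) \oplus G_{n-1}$ supplied by the Problem and cancelling the common finitely generated summand (uniqueness of the invariant-factor decomposition over the principal ideal domain $R$) forces $G_{n-1} \cong F$ to be free.

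For the single-step claim, let $S$ be the generating polyhedron of the operation $f_t \to f_{t+1}$, written as a bouquet of connected orientable closed smooth submanifolds $S_1,\dots,S_k$ with $d_j:=\dim S_j$. Proposition~\ref{prop:2} (applied with $i=n-1<n$) gives
$$H_{n-1}(W_{f_{t+1}};R) \cong H_{n-1}(W_{f_t};R) \oplus \bigoplus_{j=1}^{k} H_{d_j-1}(S_j;R),$$
so everything comes down to the following elementary fact: for a connected orientable closed smooth manifold $S$ of dimension $d$ and any principal ideal domain $R$, the module $H_{d-1}(S;R)$ is free. Here I would argue as follows. Being a smooth compact manifold, $S$ is a finite CW complex, so $H_{\ast}(S;R)$ is finitely generated over $R$; Poincar\'e duality (using orientability and closedness) gives $H_{d-1}(S;R) \cong H^1(S;R)$; and the universal coefficient theorem gives $H^1(S;R) \cong \operatorname{Hom}_R(H_1(S;R),R) \oplus \operatorname{Ext}^1_R(H_0(S;R),R)$, where the $\operatorname{Ext}$ term vanishes because $S$ is connected, so $H_0(S;R)\cong R$ is free. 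The surviving term $\operatorname{Hom}_R(H_1(S;R),R)$ is torsion-free and finitely generated over $R$, hence free. (The degenerate cases $d\le 1$ are immediate.) Consequently each bubbling step adds the free module $\bigoplus_{j} H_{d_j-1}(S_j;R)$ in degree $n-1$, which is exactly what the reduction needs.

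I do not expect a genuine obstacle: the one conceptual input is that Poincar\'e duality together with the universal coefficient theorem annihilates the torsion of the codimension-one homology of a closed orientable manifold, and the remaining work is the bookkeeping of the iteration together with the stability of freeness under direct sums and under cancellation of a common summand. The only points demanding a little care are checking that the homological formula of Proposition~\ref{prop:2} (stated there for M-bubbling operations) is indeed the relevant one for the bubbling operations occurring in the Problem, their effect on $H_i$ for $i<n$ being the same, and that the extreme values $d_j=0$ and $d_j=n$ do not disturb the formula, which they do not.
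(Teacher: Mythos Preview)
Your argument is correct and follows the same route as the paper: Proposition~\ref{prop:2} reduces the degree-$(n-1)$ contribution to $\bigoplus_j H_{\dim S_j-1}(S_j;R)$, and Poincar\'e duality (with the universal coefficient theorem) on the closed, connected, orientable $S_j$ forces each summand to be free. The paper's proof is the one-line ``Each $S_j$ is closed, connected and orientable and we can apply the Poincar\'e duality theorem''; you have simply unpacked this, together with the bookkeeping for the iteration and the cancellation over a PID, which the paper leaves implicit.
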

\begin{proof}
Each $S_j$ is closed, connected and orientable and we can apply the Poincar\'e duality theorem. 
\end{proof}
\begin{Prop}[\cite{kitazawa5}]  
\label{prop:3}
Let $R$ be a principal ideal domain. 
For any integer $0 \leq j \leq n$, we define $G_j$ as a free finitely generated $R$-module so
  that $G_0$ is a trivial $R$-module and that $G_n$ is not
 a trivial $R$-module. Then, by a finite iteration
 of M-bubbling {\rm (}S-bubbling{\rm )} operations starting from $f$, we obtain a fold map $f^{\prime}$ and $H_j(W_{{f}^{\prime}};R)$ is isomorphic to $H_j(W_f;R) \oplus G_j$. 
\end{Prop}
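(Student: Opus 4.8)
The plan is to build $f^{\prime}$ from $f$ by a finite sequence of trivial normal M-bubbling (S-bubbling) operations of the kind described in Example \ref{ex:3}, using standard spheres and points as generating polyhedra, and to read off the effect on the homology of the Reeb space from Proposition \ref{prop:2}. Write $r_j:={\rm rank}_R G_j$, so that $G_j \cong R^{r_j}$; the hypotheses give $r_0=0$ and $r_n \geq 1$. First I would fix a connected component $R_0$ of the regular value set $N-f(S(f))$ together with a small open $n$-disc $D \subset R_0$ over which some connected component of $f^{-1}(D)$ is a trivial bundle; such a $D$ exists by taking it around a regular value (cf.\ Example \ref{ex:3}). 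Inside $D$ one can embed, for each $j$ with $1 \leq j \leq n-1$, the standard sphere $S^{n-j}$ of codimension $j$ with trivial normal bundle, and, more generally, a bouquet of finitely many such spheres, so that the restriction of $f$ to a suitable component of the inverse image of a small regular neighborhood of the bouquet is a trivial bundle over that neighborhood; this realizability is the geometric input I would borrow from \cite{kitazawa5}.

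Then I would perform exactly $r_n$ bubbling operations in succession. For the first I would take the generating polyhedron $S$ to be the bouquet consisting of $r_j$ copies of $S^{n-j}$ for every $j$ with $1 \leq j \leq n-1$ (if all of these $r_j$ vanish, take a single point instead, which is Kobayashi's bubbling surgery); for each of the remaining $r_n-1$ operations I would take the generating polyhedron to be a single point. Each of these is a trivial normal M-bubbling operation, and it can be arranged to be an S-bubbling operation by choosing one of the two new fibers over ${N(S)}_i$ to be $S^{m-n}$ (cf.\ Example \ref{ex:3} and Fact \ref{fact:2}). Now I apply Proposition \ref{prop:2}. A copy of $S^{n-j}$ in the bouquet, having codimension $j$, contributes $H_{i-j}(S^{n-j};R)$ to $H_i(W_{\bullet};R)$ for $0 \leq i < n$; since $H_{\ast}(S^{n-j};R)$ is $R$ in degrees $0$ and $n-j$, this contribution is $R$ for $i=j$ and $0$ for every other $i$ with $0 \leq i < n$ (the degree-$(n-j)$ class would land in $H_n$, which is governed instead by the separate assertion $H_n(W_{f^{\prime}}) \cong H_n(W_f) \oplus R$). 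A point-generating operation, having codimension $n$, contributes $H_{i-n}(\{{\rm pt}\};R)=0$ for $0 \leq i < n$, and each of the $r_n$ operations adds one free summand $R$ to $H_n$.

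Summing these contributions, and writing $f^{\prime}$ for the map obtained after the last operation, I get $H_0(W_{f^{\prime}};R) \cong H_0(W_f;R)$, $H_i(W_{f^{\prime}};R) \cong H_i(W_f;R) \oplus R^{r_i}$ for $1 \leq i \leq n-1$, and $H_n(W_{f^{\prime}};R) \cong H_n(W_f;R) \oplus R^{r_n}$; since $G_0=0$ and $G_j \cong R^{r_j}$, this is precisely $H_j(W_{f^{\prime}};R) \cong H_j(W_f;R) \oplus G_j$ for all $0 \leq j \leq n$, as required. Two points deserve emphasis. First, the number of operations must be exactly $r_n$, because by Proposition \ref{prop:2} every M-bubbling operation enlarges $H_n$ by a single copy of $R$. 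Second, the bouquet structure of the generating polyhedron is essential: it lets one inject arbitrarily many classes into the middle-degree homology within a single operation, which is unavoidable precisely when $r_n$ is small compared with $\sum_{j=1}^{n-1} r_j$. The step I expect to require the most care is the realizability assertion of the first paragraph --- that any prescribed finite bouquet of standard spheres of dimensions between $1$ and $n-1$ occurs as the generating polyhedron of a trivial normal M-bubbling (S-bubbling) operation on $f$, with all the bundle-triviality conditions of Definition \ref{def:4} met --- but this is exactly what the constructions in \cite{kitazawa5} supply; the remainder is the bookkeeping with Proposition \ref{prop:2} indicated above.
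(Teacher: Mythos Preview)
Your proof is correct and follows essentially the same approach as the paper: perform exactly ${\rm rank}_R G_n$ trivial S-bubbling operations with generating polyhedra that are bouquets of standard spheres (or points) in an open disc of $W_f-q_f(S(f))$, chosen so that the total number of $(n-j)$-spheres used is ${\rm rank}_R G_j$, and then read off the homology from Proposition~\ref{prop:2}. The only cosmetic difference is that the paper allows the $(n-j)$-spheres to be distributed arbitrarily among the ${\rm rank}_R G_n$ bouquets (subject to the totals $\sum_k l_{k,j}={\rm rank}_R G_j$), whereas you make the specific choice of putting all of them in the first bouquet and using points for the rest; this is the same construction.
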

\begin{proof}[Sketch of the proof]
We can prove the result by an explicit finite iteration of S-bubbling operations with Proposition \ref{prop:2}. We can choose a family of disjoint bouquets of standard spheres and points in $W_f-q_f(S(f))$ satisfying the following.
\begin{enumerate}
\item Each bouquet is in an open disc in $W_f-q_f(S(f))$ and the image by the map $\bar{f}$ is in the regular value set of $f$.
\item The family includes just ${\rm rank}_R G_n$ bouquets.
\item For the family $\{S_k\}_{k=1}^{{\rm rank}_R G_n}$ of the just ${\rm rank}_R G_n$ bouquets, $S_k$ is a bouquet consisting of just $l_{k,j}$ ($n-j$)-dimensional spheres for $1 \leq j \leq n-1$ or just a point. 
\item ${\Sigma}_{k=1}^{{\rm rank}_R G_n} l_{k,j}={\rm rank}_R G_j$ 
\end{enumerate}
For the family of the bouquets, we can perform trivial S-bubbling operations whose generating polyhedra are the chosen bouquets one after another. Thus we have a desired map.
\end{proof}
\begin{Prop}[\cite{kitazawa5}]  
\label{prop:4}
Let $R$ be a principal ideal domain. 
For any integer $0 \leq j \leq n$, we define $G_j$ as a free finitely generated $R$-module so
  that $G_0$ is a trivial $R$-module, that $G_n$ is not
 a trivial $R$-module and that the relation ${\Sigma}_{k=1}^{n-1}{{\rm rank}_R G_k} \leq {\rm rank}_R G_n$ holds. Then, by a finite iteration
 of normal M-bubbling {\rm (}S-bubbling{\rm )} operations starting from $f$, we obtain a fold map $f^{\prime}$ and $H_j(W_{{f}^{\prime}};R)$ is isomorphic to $H_j(W_f;R) \oplus G_j$. 
\end{Prop}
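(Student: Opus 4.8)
The plan is to run essentially the same explicit construction as for Proposition \ref{prop:3}, but organized so that \emph{every} individual bubbling operation is normal, that is, so that each generating object is a single connected, closed, orientable submanifold (a sphere or a point) rather than a genuine bouquet of several submanifolds; the rank inequality is precisely what makes such an organization possible. Write $r_j := {\rm rank}_R G_j$, so that $r_0 = 0$, $r_n > 0$, and $\sum_{j=1}^{n-1} r_j \le r_n$.

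First I would fix an open disc inside a connected component of $W_f - q_f(S(f))$ on which $\bar{f}$ takes values in the regular value set of $f$, and inside it choose $r_n$ pairwise disjoint submanifolds, each sitting in its own small open sub-disc, as follows: for each $j$ with $1 \le j \le n-1$, exactly $r_j$ of them are standardly embedded $(n-j)$-dimensional spheres, and the remaining $r_n - \sum_{j=1}^{n-1} r_j$ of them (a nonnegative number, by the hypothesis) are points. Each such submanifold has trivial normal bundle and, after shrinking, lies over a trivial bundle of inverse images, so it is the generating manifold of a trivial normal S-bubbling operation (a point giving the Kobayashi-type case), hence in particular of a normal M-bubbling operation; moreover one is free to prescribe the new fibres, taking an almost-sphere on one side as required for S-bubbling, exactly as in Example \ref{ex:3}. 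Performing these $r_n$ operations one after another is legitimate because the chosen submanifolds lie in disjoint discs, so each operation is carried out in a region left untouched by the others, just as in the proof of Proposition \ref{prop:3}.

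It then remains to read off the homology of $W_{f'}$ by applying Proposition \ref{prop:2} to each operation in turn, with generating polyhedron a single submanifold (so $k=1$). An $(n-j)$-dimensional sphere contributes $H_{i-j}(S^{n-j};R)$ to $H_i(W;R)$ for $0 \le i < n$, which is $R$ for $i=j$ and $0$ otherwise in that range since $i-j = n-j$ would force $i = n$, and it contributes one copy of $R$ to $H_n$; a point contributes $0$ in all degrees below $n$ and one copy of $R$ to $H_n$. Summing the contributions of all $r_n$ operations yields $H_0(W_{f'};R) \cong H_0(W_f;R)$, $H_j(W_{f'};R) \cong H_j(W_f;R) \oplus R^{r_j}$ for $1 \le j \le n-1$, and $H_n(W_{f'};R) \cong H_n(W_f;R) \oplus R^{r_n}$; as each $G_j$ is free of rank $r_j$ this is exactly the assertion. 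The one genuinely new point compared with Proposition \ref{prop:3}, and the only place the hypothesis enters, is the counting in this last paragraph: a normal operation is forced to emit exactly one $R$ into degree $n$ while it can carry at most one lower-dimensional sphere, so the $r_n$ available operations suffice to realize all of $G_1,\dots,G_{n-1}$ precisely when $\sum_{j=1}^{n-1} r_j \le r_n$, any surplus operations being absorbed harmlessly by point-type (Kobayashi) bubblings.
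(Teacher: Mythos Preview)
Your proof is correct and follows essentially the same approach as the paper: choose $r_j$ copies of $S^{n-j}$ for $1\le j\le n-1$ and $r_n-\sum_{j=1}^{n-1} r_j$ points as generating manifolds, perform trivial normal S-bubbling operations one after another, and invoke Proposition~\ref{prop:2}. Your write-up is in fact more detailed than the paper's, which merely lists the family and asserts the conclusion.
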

\begin{proof}
We can choose a family of standard spheres and points in $W_f-q_f(S(f))$ satisfying the following.
\begin{enumerate}
\item The family includes just ${\rm rank}_R G_j$ copies of $S^{n-j}$ for $1 \leq j \leq n-1$.
\item The family includes just ${\rm rank}_R G_n- {\Sigma}_{k=1}^{n-1}{{\rm rank}_R G_k}$.
\end{enumerate}
For the family of the spheres and the points, we can perform trivial normal M-bubbling (S-bubbling) operations whose generating manifolds are the chosen spheres or points one after another. Thus we have a desired map.
\end{proof}
Example \ref{ex:1} and Fact \ref{fact:2} account for the case $G_j=\{0\}$ ($0 \leq j \leq n-1$) of Propositions \ref{prop:3} and \ref{prop:4} explicitly.

\begin{Prop}[\cite{kitazawa5}] 
\label{prop:5}
Let $f:M \rightarrow N$ be a fold map from a closed and connected manifold of dimension $m$ into a manifold without boundary of dimension $n$ satisfying the relation $m>n \geq 1$.
Let $R$ be a principal ideal domain. For any integer $0 \leq j \leq n$ and a finitely generated module $G_j$ over $R$ satisfying that $G_0$ is trivial and that $G_n$ is not zero. Assume that by
 a finite iteration of M-bubbling operations whose generating polyhedra are in open discs of $W_f$ to a map $f:M \rightarrow N$, we can obtain a fold map $f^{\prime}$ such
  that the module $H_j(W_{{f}^{\prime}};R)$ is isomorphic to the module $H_j(W_f;R) \oplus G_j$.
Then $G_n$ is free from Proposition \ref{prop:2} and let $H$ be a non-trivial submodule of $G_n$, which is free.

Then we can obtain a fold map $f^{\prime \prime}$ such
  that the module $H_j(W_{{f}^{\prime \prime}};R)$ is isomorphic to the module $H_j(W_f;R) \oplus G_j$ for $0 \leq j \leq n-1$ and that the module $H_n(W_{{f}^{\prime \prime}};R)$ is isomorphic to the module $H_n(W_f;R) \oplus H$ by a finite iteration of M-bubbling {\rm (}S-bubbling{\rm )} operations whose generating polyhedra are in open discs of $W_f$ to the original map.
\end{Prop}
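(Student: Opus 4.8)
The plan is to obtain $f''$ from $f$ by \emph{repackaging} the M-bubbling operations that produce $f'$ into a shorter iteration, exploiting the asymmetry in Proposition \ref{prop:2}: a single M-bubbling operation contributes an entire bouquet's worth of homology in degrees below $n$, but only one free summand $R$ in degree $n$.

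First I would extract the combinatorial data from the hypothesis. Let $f'$ be obtained from $f$ by an iteration of $N$ M-bubbling operations with generating polyhedra in open discs of $W_f$, and write the generating polyhedron of the $t$-th operation as a bouquet $S^{(t)}=S^{(t)}_1\vee\cdots\vee S^{(t)}_{k_t}$ of closed, connected, orientable submanifolds. Telescoping the isomorphism of Proposition \ref{prop:2} over the $N$ operations gives
\[
G_j\;\cong\;\bigoplus_{t=1}^{N}\ \bigoplus_{l=1}^{k_t} H_{\,j-(n-\dim S^{(t)}_l)}(S^{(t)}_l;R)\quad(1\le j\le n-1),\qquad G_n\;\cong\;R^{N}.
\]
In particular $N={\rm rank}_R\,G_n$, so the given non-trivial submodule $H\le G_n$, being free, has rank $s$ with $1\le s\le N$.

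Next I would build $f''$ from $f$ by $s$ trivial bubbling operations. Since each $S^{(t)}_l$ lies in an open disc of the regular value set of $f$, it embeds smoothly in $\mathbb{R}^n$, so there is, inside a single open disc $E_1$ of a regular-value component of $f$ over which the relevant component of the preimage is a trivial bundle (as in Example \ref{ex:3}), a bouquet diffeomorphic to $\bigvee_{t,l}S^{(t)}_l$; choose also $s-1$ further pairwise disjoint open discs $E_2,\dots,E_s$ with trivial preimage bundles and a single point in each. Perform, one after another, the trivial M-bubbling operation over $E_1$ with this large bouquet as generating polyhedron — taken to be an S-bubbling by writing the relevant fibre $F$ as the connected sum $F\#S^{m-n}$ — followed by the $s-1$ trivial S-bubbling operations over $E_2,\dots,E_s$ with the points as generating manifolds. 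This yields a fold map $f''$, and all generating polyhedra lie in open discs of $W_f$. Applying Proposition \ref{prop:2} to each step and adding the contributions: for $1\le j\le n-1$ the point operations contribute nothing (their would-be summands sit in negative degree), so $H_j(W_{f''};R)\cong H_j(W_f;R)\oplus\bigoplus_{t,l}H_{\,j-(n-\dim S^{(t)}_l)}(S^{(t)}_l;R)\cong H_j(W_f;R)\oplus G_j$; for $j=0$ nothing is added and connectivity is unchanged, so $H_0(W_{f''};R)\cong H_0(W_f;R)=H_0(W_f;R)\oplus G_0$; and for $j=n$ each of the $s$ operations adds one copy of $R$, giving $H_n(W_{f''};R)\cong H_n(W_f;R)\oplus R^{s}\cong H_n(W_f;R)\oplus H$.

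The main obstacle is the geometric realization in the previous paragraph: one must check that all the submanifolds $S^{(t)}_l$ occurring across the original iteration can simultaneously be assembled into one bouquet lying in a single open disc of the regular value set of $f$ over which the appropriate component of the preimage is a trivial bundle, so that one legitimate trivial M-bubbling (S-bubbling) operation with that generating polyhedron is available. This is precisely the flexibility guaranteed by Example \ref{ex:3}, together with the elementary observation that a submanifold contained in an open disc of (the regular value set of) $f$ embeds smoothly in $\mathbb{R}^n$; once this is granted, the homological bookkeeping via Proposition \ref{prop:2} is routine.
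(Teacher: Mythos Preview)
The paper explicitly omits the proof of this proposition, offering only the remark in the proof of Theorem~\ref{thm:6} that the key ingredient is to replace ``a bouquet'' by ``a connected sum''. Your argument is correct and is essentially the same repackaging idea that hint points to: you collect all the constituent submanifolds $S^{(t)}_l$ from the $N$ original M-bubbling operations into a single generating polyhedron and perform one operation, then make up the remaining rank in degree $n$ with $s-1$ point operations. The telescoping identity for $G_j$ ($1\le j\le n-1$) via Proposition~\ref{prop:2} is exactly right, as is the count $G_n\cong R^N$ and hence $s\le N$.

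The one difference in emphasis is that the paper's hint speaks of \emph{connected sums} of same-dimensional pieces rather than a single large \emph{bouquet}. Both devices achieve the same goal here: merging several generating polyhedra into one bubbling operation so that only a single $R$ is contributed in degree $n$ while the contributions in degrees $1\le j\le n-1$ are preserved. For two closed connected orientable manifolds $S_1,S_2$ of the same dimension~$d$, treating them as two wedge summands of a bouquet or as the single summand $S_1\# S_2$ yields the same shifted homology contributions in degrees $n-d<j<n$ and the same single copy of $R$ in degree $n$; the connected-sum variant merely consolidates the two $H_0$-contributions at degree $n-d$ into one, which is irrelevant for your argument since your big bouquet already reproduces the original telescoped sum exactly. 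So your route and the one the paper alludes to are interchangeable for this proposition.

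Your identification of the geometric point that needs care --- realizing all the $S^{(t)}_l$ simultaneously as a bouquet inside one open disc over which the relevant preimage component is a trivial bundle --- is the right place to focus, and your appeal to Example~\ref{ex:3} together with the fact that each $S^{(t)}_l$ already sits in an open disc (hence embeds in $\mathbb{R}^n$) is precisely how the paper handles the analogous step in the proofs of Proposition~\ref{prop:3} and Theorems~\ref{thm:2} and~\ref{thm:4}.
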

We omit the proof of Proposition \ref{prop:5}
%\begin{Prop}[\cite{kitazawa5}]
%\label{prop:3}

%Let $R$ be a principal ideal domain. For a fold map $f:M \rightarrow N$, let $f^{\prime}:{M}^{\prime} \rightarrow N$ be a fold map obtained by a finite iteration of normal M-bubbling operations to $f$. Then for any integer $0 \leq j \leq n$, we can set  a finitely %generated module $G_j$ over $R$ so
%  that $G_0$ is a trivial $R$-module, that $G_n$ is not
% a trivial $R$-module and that the module $H_j(W_{{f}^{\prime}};R)$ is isomorphic to $H_j(W_f;R) \oplus G_j$. Let $i$ be the
% minimal integer $j$ such that $G_j$ is not trivial. Then $G_i$ and $G_n$ are free and the rank of $G_i$ is not larger than the rank of $G_n$. Moreover, the time of operations whose generating manifolds are of dimension $n-i$ in the iteration is ${\rm rank} %\quad G_i$. In addition, $n-i$ is the maximum among the dimensions of the generating manifolds in the family of the operations and ${\rm rank} \quad G_n$ is the time of the iteration of operations.%
%\end{Prop} 
%\begin{proof}[Sketch of the proof]
%We have the first part from Proposition \ref{prop:1} immediately. 
%In the finite iteration of normal M-bubbling operations, the time of operations whose generating manifolds are of dimension $n-i$ is ${\rm rank} \quad G_i$ by virtue of Proposition \ref{prop:1}. It also follows that the minimal dimension among the dimensions of %he generating manifolds is $n-i$ and $G_i$ is free. Moreover, $G_n$ is free and ${\rm rank} \quad G_n$ is the time of the iteration of the operations.
%\end{proof}
 \section{Main results}
Observing Propositions \ref{prop:3}, \ref{prop:4} and \ref{prop:5}, the family $\{G_j\}$ of groups realized by constructing a new fold map $f^{\prime}$ seems to be flexible. On the other hand, in this paper, we will discover several explicit restrictions on the groups under the existence of several groups of the family whose torsion subgroups are not trivial. 

We review a fundamental fact on the structure of a finitely generated commutative group.
We denote the set of all positive integers $q$ satisfying the relation $q \leq p>0$ by $\lfloor p \rfloor$. 
We denote the finite cyclic group of order $k>0$ by ${\mathbb{Z}}_k$.
%we denote the maximum and the minimum of an ordered set $A$ by $\max A$ and $\min A$ (if they exist), respectively.
The following fact is fundamental. It is implicitly used in the previous sections.
\begin{Fact}
\label{fact:3}
Let $G$ be a non-trivial finitely generated commutative group.
Then there exist a non-negative integer $f$, a finite set $A$ of prime numbers and a map
 $n:A \rightarrow \mathbb{N}$ and for each $p \in A$, a map $m(p):\lfloor n(p) \rfloor \rightarrow \mathbb{N} \sqcup \{0\}$ satisfying $m(p)(n(p))>0$ {\rm (}we denote the map on $A$ corresponding the previous map $m(p)$ for each $p \in A$ by $m${\rm )} and $G$ is isomorphic to the direct sum $${\mathbb{Z}}^f \oplus {\oplus}_{p \in A} ({\oplus}_{j=1}^{n(p)} ({\oplus}_{k=1}^{m(p)(j)} {{\mathbb{Z}}_{p^j}}))$$
of cyclic groups. Moreover, we can take the tuple $(f,A,n,m)$ uniquely: for example, $f$ is the rank of $G$ or ${\rm rank}_{\mathbb{Z}} G$.     
\end{Fact}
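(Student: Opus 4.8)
The plan is to reduce the statement to the classical structure theory of finitely generated modules over the principal ideal domain $\mathbb{Z}$ and then reorganize the torsion part by primes. First I would split $G \cong \mathbb{Z}^f \oplus T$, where $T$ is the torsion subgroup of $G$ (the set of elements of finite order, a subgroup because $G$ is commutative) and $f = {\rm rank}_{\mathbb{Z}} G$; that such a splitting exists and that $T$ is finite is exactly the structure theorem for finitely generated $\mathbb{Z}$-modules, which I would invoke directly. Next, for each prime $p$ let $T_p \subset T$ be the $p$-primary subgroup, i.e.\ the set of $x \in T$ killed by some power of $p$. Since $T$ is finite, $T_p = \{0\}$ for all but finitely many $p$, and an elementary Chinese-remainder-theorem argument (equivalently, decomposing each finite cyclic summand of $T$ according to the prime factorization of its order) gives $T \cong \bigoplus_p T_p$. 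Finally, each nonzero $T_p$ is a finite abelian $p$-group, so the structure theorem writes $T_p \cong \bigoplus_{j \geq 1} ({\mathbb{Z}}_{p^j})^{m(p)(j)}$ with only finitely many nonzero multiplicities; putting $A = \{p : T_p \neq \{0\}\}$, $n(p) = \max\{j : m(p)(j) > 0\}$, and retaining the multiplicities $m(p)(j)$ for $1 \leq j \leq n(p)$ yields the asserted decomposition, with $m(p)(n(p)) > 0$ by construction.

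For uniqueness, the point is that every component of the tuple $(f, A, n, m)$ is an isomorphism invariant of $G$. The integer $f$ equals $\dim_{\mathbb{Q}}(G \otimes_{\mathbb{Z}} \mathbb{Q})$, equivalently the rank of $G/T$, hence is determined by $G$. The torsion subgroup $T$ is intrinsically defined, and so is each $p$-primary part $T_p = \{x \in G : p^k x = 0 \text{ for some } k \geq 1\}$; thus $A$ is determined as the set of primes $p$ with $T_p \neq \{0\}$. It remains to recover the multiplicities $m(p)(j)$, and hence $n(p)$, from the isomorphism type of $T_p$. For this I would consider the $\mathbb{F}_p$-vector spaces $V_k := p^{k-1} T_p / p^{k} T_p$ for $k \geq 1$: a direct computation summand by summand in $\bigoplus_j ({\mathbb{Z}}_{p^j})^{m(p)(j)}$ shows $\dim_{\mathbb{F}_p} V_k = \sum_{j \geq k} m(p)(j)$, so that $m(p)(k) = \dim_{\mathbb{F}_p} V_k - \dim_{\mathbb{F}_p} V_{k+1}$ and $n(p) = \max\{k : \dim_{\mathbb{F}_p} V_k > \dim_{\mathbb{F}_p} V_{k+1}\}$. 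Since the quantities $\dim_{\mathbb{F}_p} V_k$ depend only on $T_p$, the multiplicities and $n(p)$ are uniquely determined.

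The only step that is not a bare appeal to standard theory or routine bookkeeping is the uniqueness of the multiplicities $m(p)(j)$, i.e.\ verifying that the invariants $\dim_{\mathbb{F}_p}(p^{k-1} T_p / p^{k} T_p)$ behave as claimed; this dimension count is the crux of the argument, and everything else follows from the structure theorem over $\mathbb{Z}$. As this Fact is entirely classical, in the final write-up I would state the existence part with a citation to a standard reference on finitely generated abelian groups and include only the short dimension-counting argument above to pin down uniqueness.
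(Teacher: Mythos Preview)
Your argument is correct and is the standard route to the structure theorem for finitely generated abelian groups. However, the paper does not prove this Fact at all: it is introduced with the sentence ``The following fact is fundamental'' and simply stated without argument, exactly as one would cite a textbook result. So there is nothing to compare at the level of method; your closing remark---that in a final write-up you would cite a standard reference for existence and include only the short dimension-counting step for uniqueness---already matches (and in fact slightly exceeds) what the paper does.
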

We denote the summand represented by ${\oplus}_{p \in A} ({\oplus}_{j=1}^{n(p)} ({\oplus}_{k=1}^{m(p)(j)} {{\mathbb{Z}}_{p^j}}))$ by $TG$ and call this the {\it torsion subgroup} of $G$.
\subsection{A case where there exists just one $G_j$ whose torsion-group is non-trivial in Problem in the introduction}
\begin{Thm}
\label{thm:1}
In Problem in the introduction, if there exists just one group $G_j$ in the family of the groups such that the torsion subgroup $T_{G_j}$ is not trivial and we can construct a map $f^{\prime}$ satisfying the condition, then the relations $j<n-1$ and $2j-n+1>0$ hold and ${\rm rank}_{\mathbb{Z}} G_{2j-n+1}$ is positive.  
\end{Thm}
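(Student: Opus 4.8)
The plan is to describe each $G_i$ explicitly in terms of the generating polyhedra by iterating Proposition \ref{prop:2}, and then to confront the torsion of $G_j$ with its Poincar\'e dual on the submanifold that creates it. First I would apply Proposition \ref{prop:2} once for every bubbling operation in the iteration producing $f^{\prime}$; writing $\{S_{\alpha}\}_{\alpha}$ for the collection of all connected components of all generating polyhedra occurring there and $d_{\alpha}:=\dim S_{\alpha}$, this gives
\[ G_i\cong{\bigoplus}_{\alpha}H_{i-(n-d_{\alpha})}(S_{\alpha};R)\qquad(0\le i\le n-1), \]
together with $G_n$ free. Each $S_{\alpha}$ is a closed, connected, orientable submanifold of the $n$-dimensional regular value set, so $0\le d_{\alpha}\le n-1$. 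As the torsion of a direct sum is the direct sum of the torsions, $T_{G_j}\neq\{0\}$ yields an index $\alpha$ with $TH_{\ell}(S;R)\neq\{0\}$, where $S:=S_{\alpha}$, $d:=d_{\alpha}$ and $\ell:=j-n+d$; since $H_0(S;R)\cong H_d(S;R)\cong R$ are free, this forces $1\le\ell\le d-2$ (so $d\ge 3$).

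Next I would use Poincar\'e duality on $S$ together with the universal coefficient theorem over the principal ideal domain $R$: this gives $TH_{\ell}(S;R)\cong TH_{d-1-\ell}(S;R)$, so $H_{d-1-\ell}(S;R)$ also has non-trivial torsion, which by the display sits inside $G_{i^{\prime}}$ with $i^{\prime}=(d-1-\ell)+(n-d)=n-1-\ell$. Since $1\le\ell\le d-2\le n-3$, we have $2\le i^{\prime}\le n-2$, so $G_{i^{\prime}}$ is one of the groups of the family and it carries non-trivial torsion; the uniqueness hypothesis then forces $i^{\prime}=j$, i.e. $\ell=n-1-j$, hence $d=2n-1-2j$. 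From $\ell=n-1-j\ge 1$ we get $j\le n-2$, i.e. $j<n-1$; from $d=2n-1-2j\le n-1$ we get $2j-n+1>0$. Finally, with $d=2n-1-2j$ the same component $S$ contributes $H_{(2j-n+1)-(n-d)}(S;R)=H_0(S;R)\cong R$ as a direct summand of $G_{2j-n+1}$, so ${\rm rank}_{\mathbb{Z}}G_{2j-n+1}>0$.

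I expect the first step to be the real obstacle: one must verify that iterating Proposition \ref{prop:2} indeed yields the clean direct-sum description of every $G_i$ through the shifted homologies of all generating components — so that a non-trivial torsion class in $G_j$ can be pinned to a single submanifold $S$ and a single degree $\ell$ — and one must make sure the dual degree $d-1-\ell$ never falls into degree $n$, where Proposition \ref{prop:2} contributes only a free summand $R$ and the duality would be vacuous; the bounds $1\le\ell\le d-2$ and $d\le n-1$ are precisely what prevents this. Beyond that the argument is just the Poincar\'e-duality identity $TH_{\ell}(S;R)\cong TH_{d-1-\ell}(S;R)$ — the same mechanism behind Corollary \ref{cor:1} — together with the dimension bound $d\le n-1$ for submanifolds of the $n$-dimensional regular value set.
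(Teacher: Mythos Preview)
Your argument is correct and follows the same route as the paper's proof: iterate Proposition~\ref{prop:2} to express each $G_i$ as a direct sum of shifted homology groups of the generating components, locate a closed orientable component $S$ carrying the torsion of $G_j$, and use Poincar\'e duality together with the universal coefficient theorem (plus the uniqueness hypothesis) to force $\dim S=2n-2j-1$, from which the inequalities and the positivity of ${\rm rank}_{\mathbb{Z}}G_{2j-n+1}$ follow via the $H_0(S)\cong R$ summand. Your write-up is more explicit than the paper's terse version, but the mechanism is identical.
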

\begin{proof}
From Corollary \ref{cor:1}, we have the relation $j<n-1$.
From Proposition \ref{prop:2} together with the Poincar\'e duality theory and the universal coefficient theorem, there exists a generating polyhedron represented as a bouquet of a finite number of closed, connected and orientable manifolds and the family of manifolds includes a ($2n-2j-1$)-dimensional manifold $S_i$ such that the group $H_{n-j-1}(S_i;\mathbb{Z})$ is isomorphic to a group whose torsion subgroup is a non-trivial subgroup of $T_{G_j} \subset G_j$ and the group $H_{j^{\prime}}(S_i;\mathbb{Z})$ is trivial for $0<j^{\prime}<2n-2j-1$, $j^{\prime} \neq n-j-1$. We can see the inequality $n-(2n-2j-1)=2j-n+1>0$ holds. The last part follows immediately from Proposition \ref{prop:2}.
\end{proof}
\begin{Thm}
\label{thm:2}
In Problem in the introduction, if there exists just one group $G_j$ in the family of the groups whose torsion subgroup is not trivial and there exists a closed, connected and orientable {\rm (}$2n-2j-1${\rm )}-dimensional manifold $S^{\prime}$ we can embed into ${\mathbb{R}}^n$ such that the group $H_{n-j-1}(S^{\prime};\mathbb{Z})$ is isomorphic to $T G_j$ and that the group $H_{j^{\prime}}(S^{\prime};\mathbb{Z})$ is trivial for $0<j^{\prime}<2n-2j-1$, $j^{\prime} \neq n-j-1$, then we can construct a map $f^{\prime}$ satisfying the condition by a finite iteration of normal M-bubbling operations starting from a fold map $f$.  
\end{Thm}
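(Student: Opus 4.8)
The plan is to realize the prescribed torsion by a single normal bubbling operation whose generating manifold is a copy of $S'$, and then to obtain the remaining free summands by further normal bubbling operations exactly as in the proofs of Propositions \ref{prop:3}, \ref{prop:4} and \ref{prop:5}. Before starting I would record two numerical facts that come for free from the hypotheses. A closed manifold of dimension $d$ embeds in ${\mathbb{R}}^n$ only when $d\leq n-1$, so the existence of $S'$ forces $2n-2j-1\leq n-1$, i.e. $2j-n+1\geq 1$; and since $H_{n-j-1}(S';{\mathbb{Z}})\cong TG_j$ is non-trivial torsion while $H_0(S';{\mathbb{Z}})\cong H_{2n-2j-1}(S';{\mathbb{Z}})\cong{\mathbb{Z}}$ by connectedness and orientability, the degree $n-j-1$ is neither $0$ nor $\dim S'$, whence $0<n-j-1<2n-2j-1$, so $j\leq n-2$ and $3\leq\dim S'=2n-2j-1\leq n-1$. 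In particular the constraints $j<n-1$ and $2j-n+1>0$ of Theorem \ref{thm:1} are automatically satisfied.

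Since $\dim S'\leq n-1$, a copy of $S'$ together with a tubular neighbourhood fits inside a small open $n$-disc $D$ contained in a connected component of the regular value set of $f$ over which some connected component $P$ of the inverse image is a trivial bundle; such a $D$ exists for any fold map into an $n$-manifold. By the discussion around Example \ref{ex:3} (which applies to an arbitrary closed, connected, orientable submanifold lying in a disc, not only to spheres), I would perform a trivial normal M-bubbling operation with generating manifold homeomorphic to this copy of $S'$, prescribing the two new fibres to be the original fibre and a standard sphere $S^{m-n}$, so that the operation is in fact a normal S-bubbling operation. Call the result $f_1$. Applying Proposition \ref{prop:2} with the single generating manifold $S'$ of dimension $2n-2j-1$, so that $n-\dim S'=2j-n+1$, gives $H_k(W_{f_1};{\mathbb{Z}})\cong H_k(W_f;{\mathbb{Z}})\oplus H_{k-(2j-n+1)}(S';{\mathbb{Z}})$ for $0\leq k<n$ together with $H_n(W_{f_1};{\mathbb{Z}})\cong H_n(W_f;{\mathbb{Z}})\oplus{\mathbb{Z}}$. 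By the assumed homology of $S'$ the only contributions below degree $n$ occur in degree $k=2j-n+1$, where $H_0(S')\cong{\mathbb{Z}}$ contributes a copy of ${\mathbb{Z}}$, and in degree $k=(2j-n+1)+(n-j-1)=j$, where $H_{n-j-1}(S')$ contributes a copy of $TG_j$. Hence $f_1$ adds, relative to $f$, a copy of $TG_j$ in degree $j$, a copy of ${\mathbb{Z}}$ in degrees $2j-n+1$ and $n$, and nothing else.

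It then remains to append the free parts. Writing $G_k\cong{\mathbb{Z}}^{{\rm rank}_{\mathbb{Z}}G_k}$ for $k\neq j$ and $G_j\cong{\mathbb{Z}}^{{\rm rank}_{\mathbb{Z}}G_j}\oplus TG_j$ by Fact \ref{fact:3}, and noting that for the family to be realizable at all one must have ${\rm rank}_{\mathbb{Z}}G_n\geq 1$ (the Problem) and ${\rm rank}_{\mathbb{Z}}G_{2j-n+1}\geq 1$ (Theorem \ref{thm:1}), what is left is to add free summands ${\mathbb{Z}}^{{\rm rank}_{\mathbb{Z}}G_j}$ in degree $j$, ${\mathbb{Z}}^{{\rm rank}_{\mathbb{Z}}G_{2j-n+1}-1}$ in degree $2j-n+1$, ${\mathbb{Z}}^{{\rm rank}_{\mathbb{Z}}G_k}$ in each remaining degree $1\leq k\leq n-1$, and ${\mathbb{Z}}^{{\rm rank}_{\mathbb{Z}}G_n-1}$ in degree $n$. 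I would do this exactly as in Proposition \ref{prop:4} (and, where convenient, Propositions \ref{prop:3} and \ref{prop:5}), by a finite sequence of trivial normal M-bubbling operations whose generating manifolds are standard spheres $S^{n-k}$, points, and---where the rank distribution requires it---products of standard spheres, each embedded in an open disc of the current Reeb space lying over the regular value set; Proposition \ref{prop:2} again controls the effect on homology and no new torsion is introduced because these generating manifolds are torsion-free. Composing all of these operations with $f_1$ produces a map $f'$ with $H_k(W_{f'};{\mathbb{Z}})\cong H_k(W_f;{\mathbb{Z}})\oplus G_k$ for every $k$, as required.

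The main obstacle is the middle step: one must check that a normal (S-)bubbling operation is genuinely legitimate with a generating manifold $S'$ carrying non-trivial torsion homology, and that Proposition \ref{prop:2} applies to its Reeb space verbatim. This is exactly where the embeddability hypothesis does real work, guaranteeing both that $\dim S'\leq n-1$, so that $S'$ with a tubular neighbourhood fits into a regular-value disc, and that a copy of $S'$ actually sits inside the target as a submanifold of a regular value. The bookkeeping for the free part is then routine, being governed by the same mechanism as Propositions \ref{prop:3}--\ref{prop:5}; the one point there worth stating explicitly is that every normal M-bubbling operation raises the rank of $H_n$ of the Reeb space by exactly one, which both fixes the number of operations to be ${\rm rank}_{\mathbb{Z}}G_n$ and is the reason a Proposition \ref{prop:4}-type rank condition enters the last step.
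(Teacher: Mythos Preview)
Your argument is correct and follows essentially the same approach as the paper: use $S'$ to realize the torsion and standard spheres and points for the free summands, with Proposition~\ref{prop:2} supplying the homology bookkeeping. The only difference is organizational---the paper packs $S'$ together with all the required copies of $S^{n-j'}$ into a single bouquet and performs one trivial M-bubbling operation with that bouquet as generating polyhedron (followed by ${\rm rank}_{\mathbb Z}G_n-1$ point operations), whereas you perform a normal operation with $S'$ alone first and then invoke Propositions~\ref{prop:3}--\ref{prop:5} for the remaining free part.
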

\begin{proof}
This can be shown by applying Proposition \ref{prop:2}.
We take a bouquet of a finite number of closed, connected and orientable submanifolds in an open disc in $W_f-q_f(S(f))$ and ${\rm rank}_{\mathbb{Z}} G_n-1$ points. For the bouquet, we take the following manifolds.
\begin{enumerate}
\item $S^{\prime}$.
\item ${\rm rank}_{\mathbb{Z}} G_{j^{\prime}}$ copies of the ($n-j^{\prime}$)-dimensional standard sphere $S^{n-j^{\prime}}$ ($1 \leq j^{\prime} <2j-n+1, 2j-n+1 < j^{\prime} \leq n-1$).
\item ${\rm rank}_{\mathbb{Z}} G_{j^{\prime}}-1$ copies of the ($n-j^{\prime}$)-dimensional standard sphere $S^{n-j^{\prime}}$ ($j^{\prime}=2j-n+1$).
\end{enumerate}
We perform trivial M-bubbling operations whose generating polyhedra are the bouquet or the points one after another.
\end{proof}
For explicit manifolds for $S^{\prime}$, we use $3$-dimensional closed, connected and orientable manifolds, which we can embed into ${\mathbb{R}}^5$ as shown in \cite{wall} etc., $5$-dimensional closed and simply-connected manifolds such that the torsion subgroups of the 2nd homology groups whose coefficients are $\mathbb{Z}$ are non-trivial (\cite{barden}), $7$-dimensional closed and $2$-connected manifolds such that the torsion subgroups of the 3rd homology groups whose coefficients are $\mathbb{Z}$ are non-trivial (\cite{crowleyescher}: $S^3$-bundles over $S^4$, investigated there, have been studied also in \cite{milnor} etc.), etc..
\subsection{A case where there exist just two $G_j$ whose torsion-groups are non-trivial in Problem in the introduction}
\begin{Thm}
\label{thm:3}
In Problem in the introduction, if there exist just two groups $G_{j_1}$ and $G_{j_2}$ {\rm (}$0<j_1<j_2<n${\rm )} in the family of the groups whose torsion subgroups are not trivial and we can construct a map $f^{\prime}$ satisfying the condition, then the relation $j_1,j_2<n-1$ holds and the following hold.
\begin{enumerate}
\item Let $G_{j_1}$ be not represented as a summand of $G_{j_2}$. Then the relations $2n-2j_1-1<n$ and ${\rm rank}_{\mathbb{Z}} G_{2j_1-n+1}>0$ hold.
\item Conversely, let $G_{j_2}$ be not represented as a summand of $G_{j_1}$. Then the relations $2n-2j_2-1<n$ and ${\rm rank}_{\mathbb{Z}} G_{2j_2-n+1}>0$ hold.
\end{enumerate}
In both cases, the number of $j$ satisfying ${\rm rank}_{\mathbb{Z}} G_{j}>0$ hold must be at least $2$.
\end{Thm}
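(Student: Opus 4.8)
The plan is to mimic the proof of Theorem \ref{thm:1}, but now applied separately to each of the two distinguished degrees $j_1$ and $j_2$, and then to reconcile the two resulting constraints with the hypothesis about which group is (or is not) a summand of the other. First I would recall that $f^{\prime}$ is obtained from $f$ by a finite iteration of $\text{M}$-bubbling operations; by Proposition \ref{prop:2} each such operation contributes, in degree $i<n$, a summand $\oplus_{j} H_{i-(n-\dim S_j)}(S_j;R)$ for the pieces $S_j$ of the generating polyhedron, and exactly one extra copy of $R$ in degree $n$. Iterating and summing, the total new module $G_i$ in degree $i<n$ is a direct sum of terms $H_{i-(n-\dim S)}(S;\mathbb{Z})$ over all generating submanifolds $S$ occurring in the whole iteration (after base-changing to $R$, but for the torsion statements one works over $\mathbb{Z}$, as the proof of Theorem \ref{thm:1} does). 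Since each such $S$ is closed, connected and orientable, Poincar\'e duality applies to it, giving the symmetry $H_{a}(S;\mathbb{Z})\cong H_{\dim S-a}(S;\mathbb{Z})$ modulo torsion and the dual torsion relation $\mathrm{Tor}\,H_a(S;\mathbb{Z})\cong \mathrm{Tor}\,H_{\dim S-a-1}(S;\mathbb{Z})$; this is the engine behind everything. Corollary \ref{cor:1} immediately gives $j_1,j_2<n-1$ since $G_{n-1}$ must be free.

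Next I would isolate, for each of $j_1$ and $j_2$, the generating submanifolds responsible for the nonzero torsion. Because $T_{G_{j_1}}\neq 0$, some generating submanifold $S$ has $\mathrm{Tor}\,H_{j_1-(n-\dim S)}(S;\mathbb{Z})\neq 0$; write $d=\dim S$ and $a=j_1-(n-d)$, so $0<a<d$ (the endpoints are excluded since $H_0$ and $H_d$ of a connected closed orientable manifold are free). Poincar\'e duality then forces a nonzero torsion summand of $H_{d-a-1}(S;\mathbb{Z})$, which by Proposition \ref{prop:2} lands in $G_{i}$ for $i=(d-a-1)+(n-d)=n-a-1=2n-2j_1-1-d+d$; rearranging, $i = 2n - 2j_1 - 1 + (\text{shift})$ — more precisely $i = j_1 - (n-d) $ replaced by $d-a-1$ gives $i = n-1-a$ where $a = j_1-n+d$, hence $i = 2n - 1 - j_1 - d$. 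Since the \emph{only} degrees with nonzero torsion are $j_1$ and $j_2$, this dual degree $i$ must equal $j_1$ or $j_2$. The case $i=j_1$ forces $d = 2n-1-2j_1$, i.e. $\dim S = 2n-2j_1-1$, and then $2n-2j_1-1 = d \le n$ (a submanifold of an open subset of $\mathbb{R}^n$ has dimension at most $n-1<n$, actually $\le n-1$, which already gives the strict inequality, but one records $2n-2j_1-1<n$); moreover the surviving free part of $H_{d-a-1}(S;\mathbb{Z})$ — note $H_{n-j_1-1}(S;\mathbb{Z})$ has positive rank once it has torsion, because a closed orientable manifold of dimension $2n-2j_1-1$ with $H_{n-j_1-1}$ nonzero torsion has, by Poincar\'e duality, $H_{n-j_1-1}\oplus H_{n-j_1}$ balancing the Euler characteristic contribution, forcing positive rank in complementary degree — contributes positive $\mathrm{rank}_{\mathbb{Z}}G_{2j_1-n+1}$. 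The case $i=j_2$ is exactly the situation where the torsion of $G_{j_1}$ is "explained away" by $G_{j_2}$: here one reads off that $G_{j_1}$ (or a nontrivial part of it) appears as a summand of $G_{j_2}$, so the hypothesis of item (1) that $G_{j_1}$ is \emph{not} a summand of $G_{j_2}$ rules it out, leaving only $i=j_1$ and hence the stated conclusions. Item (2) is the mirror image with $j_1$ and $j_2$ exchanged.

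For the final sentence — that in either case at least two degrees $j$ have $\mathrm{rank}_{\mathbb{Z}}G_j>0$ — I would argue as follows. In case (1) we have shown $\mathrm{rank}_{\mathbb{Z}}G_{2j_1-n+1}>0$; I also always have $\mathrm{rank}_{\mathbb{Z}}G_n>0$ (this is part of the standing Problem: $G_n$ is non-trivial, and being free of positive rank by Proposition \ref{prop:2}). So it suffices to check $2j_1-n+1\neq n$, i.e. $j_1\neq n-\tfrac12$, which is automatic since $j_1$ is an integer and also $j_1<n-1$ forces $2j_1-n+1<n-1<n$. Hence the two witnessing degrees $2j_1-n+1$ and $n$ are distinct, giving the count $\ge 2$; case (2) is identical with $j_2$.

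The main obstacle I anticipate is bookkeeping: tracking \emph{which} generating submanifold, over the whole finite iteration of M-bubbling operations, carries each torsion class, and being careful that the degree-shift $n-\dim S$ is applied consistently when passing back and forth between $H_\ast(S)$ and the summands of $G_\ast$. One must also be slightly careful that "the torsion of $G_{j_1}$ is a summand of $G_{j_2}$" is the correct reading of the alternative $i=j_2$ (a priori only a nontrivial subgroup of $T_{G_{j_1}}$ need reappear in $G_{j_2}$, but a single generating submanifold realizes all of its own torsion in the two Poincar\'e-dual degrees simultaneously, so choosing the hypothesis as stated — "$G_{j_1}$ is not represented as a summand of $G_{j_2}$" — is exactly what kills this branch). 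Everything else is a direct transcription of the Theorem \ref{thm:1} argument applied twice.
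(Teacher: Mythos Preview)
Your overall strategy---apply the Theorem~\ref{thm:1} argument to each of $j_1,j_2$ and use the summand hypothesis to rule out the branch where the Poincar\'e-dual torsion lands at the other index---is the paper's approach. But the step where you extract $\mathrm{rank}_{\mathbb{Z}}G_{2j_1-n+1}>0$ is wrong on two counts. First, the Euler-characteristic claim that $H_{n-j_1-1}(S;\mathbb{Z})$ must have positive free rank once it has torsion is false: a lens space $L(p,1)$ (with $2n-2j_1-1=3$, $n-j_1-1=1$) has $H_1=\mathbb{Z}_p$ of rank zero, and the Euler characteristic vanishes automatically for any closed odd-dimensional orientable manifold, so it imposes no such constraint. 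Second, even if that group did have free rank, under Proposition~\ref{prop:2} the summand $H_{n-j_1-1}(S;\mathbb{Z})$ lands in $G_i$ with
\[
i=(n-j_1-1)+(n-\dim S)=(n-j_1-1)+(2j_1-n+1)=j_1,
\]
not in $G_{2j_1-n+1}$. The actual source of the rank is $H_0(S;\mathbb{Z})\cong\mathbb{Z}$: once $\dim S=2n-2j_1-1$ is established, Proposition~\ref{prop:2} places this $H_0$ in $G_{n-\dim S}=G_{2j_1-n+1}$. This is exactly the ``last part follows immediately from Proposition~\ref{prop:2}'' in the proof of Theorem~\ref{thm:1}, and the paper handles Theorem~\ref{thm:3} the same way.

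There is also a smaller gap in how you dispose of the branch $i=j_2$. You fix \emph{one} generating submanifold $S$ contributing torsion to $G_{j_1}$; if the dual torsion of that particular $S$ lands in $G_{j_2}$, you learn only that a \emph{piece} of $TG_{j_1}$ appears as a summand of $G_{j_2}$, which does not yet contradict ``$G_{j_1}$ is not represented as a summand of $G_{j_2}$''. You must run the dichotomy over \emph{every} generating submanifold $S$ with $TH_{j_1-(n-\dim S)}(S;\mathbb{Z})\neq 0$: if $i=j_2$ for all of them, then all of $TG_{j_1}$ sits as a summand of $G_{j_2}$, giving the contradiction; hence for at least one such $S$ we have $i=j_1$, which pins down $\dim S=2n-2j_1-1$.
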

\begin{proof}
From Corollary \ref{cor:1}, we have the relation $j_1,j_2<n-1$. We show the former case of the numbered cases. By virtue of Proposition \ref{prop:2} and the Poincar\'e duality theorem together with the universal coefficient theorem, for a bouquet of a finite number of closed, connected and orientable manifolds, the family of manifolds includes a ($2n-2j_1-1$)-dimensional manifold $S_i$ such that the group $H_{n-j_1-1}(S_i;\mathbb{Z})$ is isomorphic to a group whose torsion subgroup is a non-trivial subgroup of $G_{j_1}$ and that the group $H_{j^{\prime}}(S_i;\mathbb{Z})$ is trivial for $0<j^{\prime}<2n-2j_1-1$, $j^{\prime} \neq n-j_1-1$. We can show the latter case similarly. The last statement follows easily from the discussion with Proposition \ref{prop:2}.
\end{proof}
\begin{Thm}
\label{thm:4}
In Problem in the introduction, let there exist just two groups $G_{j_1}$ and $G_{j_2}$ {\rm (}$j_1<j_2${\rm )} in the family of the groups whose torsion subgroups are not trivial.
\begin{enumerate}
\item Assume that there exist {\rm (}$2n-2j_1-1${\rm )}-dimensional and {\rm (}$2n-2j_2-1${\rm )}-dimensional closed, connected and orientable manifolds $S_1$ and $S_2$ we can embed into ${\mathbb{R}}^n$ such that the groups $H_{n-j_1-1}(S_1;\mathbb{Z})$ and $H_{n-j_2-1}(S_2;\mathbb{Z})$ are isomorphic to $G_{j_1}$ and $G_{j_2}$, respectively, and that the group $H_{j}(S_i;\mathbb{Z})$ is trivial for $0<j<2n-2j_i-1$, $j \neq n-j_i-1$ for $i=1,2$.
\begin{enumerate}
\item Let $G_{j_1}$ be not represented as a summand of $G_{j_2}$. Let ${\rm rank}_{\mathbb{Z}} G_{2j_i+1-n}>0$ hold for $i=1,2$. Then we can construct a map $f^{\prime}$ satisfying the condition.
\item Conversely, let $G_{j_2}$ be not represented as a summand of $G_{j_1}$. Let ${\rm rank}_{\mathbb{Z}} G_{2j_i+1-n}>0$ hold for $i=1,2$. Then we can construct a map $f^{\prime}$ satisfying the condition.
\end{enumerate}
\item Assume that $G_{j_2}$ is represented as the direct sum of $G_{j_1}$ and a group $G$. Assume that there exist {\rm (}$2n-2j_2-1${\rm )}-dimensional and {\rm (}$2n-j_1-j_2-1${\rm )}-dimensional closed, connected and orientable manifolds $S_1$ and $S_2$ we can embed into ${\mathbb{R}}^n$ satisfying the following.
\begin{enumerate}
\item The groups $H_{n-j_2-1}(S_1;\mathbb{Z})$ and $H_{n-j_2-1}(S_2;\mathbb{Z})$ are isomorphic to $G$ and $G_{j_1}$, respectively.
\item The group $H_{n-j_1-1}(S_2;\mathbb{Z})$ is isomorphic to $G_{j_1}$.
\item The group $H_{j}(S_1;\mathbb{Z})$ is trivial for $0<j<2n-2j_2-1$, $j \neq n-j_2-1$.
\item The group $H_{j}(S_2;\mathbb{Z})$ is trivial for $0<j<2n-j_1-j_2-1$, $j \neq n-j_1-1,n-j_2-1$. 
\end{enumerate} 
 Let ${\rm rank}_{\mathbb{Z}} G_{2j_2+1-n}>0$ and ${\rm rank}_{\mathbb{Z}} G_{j_1+j_2+1-n}>0$. Then we can construct a map $f^{\prime}$ satisfying the condition.
\item Assume that $G_{j_1}$ is represented as the direct sum of $G_{j_2}$ and a group $G$. Assume that there exist {\rm (}$2n-2j_1-1${\rm )}-dimensional and {\rm (}$2n-j_1-j_2-1${\rm )}-dimensional closed, connected and orientable manifolds $S_1$ and $S_2$ we can embed into ${\mathbb{R}}^n$ satisfying the following.
\begin{enumerate}
\item The groups $H_{n-j_1-1}(S_1;\mathbb{Z})$ and $H_{n-j_1-1}(S_2;\mathbb{Z})$ are isomorphic to $G$ and $G_{j_2}$, respectively.
\item The group $H_{n-j_2-1}(S_2;\mathbb{Z})$ is isomorphic to $G_{j_2}$.
\item The group $H_{j}(S_1;\mathbb{Z})$ is trivial for $0<j<2n-2j_1-1$, $j \neq n-j_1-1$.
\item The group $H_{j}(S_2;\mathbb{Z})$ is trivial for $0<j<2n-j_1-j_2-1$, $j \neq n-j_1-1,n-j_2-1$. 
\end{enumerate}
Let ${\rm rank}_{\mathbb{Z}} G_{2j_1+1-n}>0$ and ${\rm rank}_{\mathbb{Z}} G_{j_1+j_2+1-n}>0$. Then we can construct a map $f^{\prime}$ satisfying the condition.
\end{enumerate}
\end{Thm}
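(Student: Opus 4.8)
The plan is to argue exactly as in the proof of Theorem~\ref{thm:2}, which itself refines Propositions~\ref{prop:3} and~\ref{prop:4}: one exhibits a single generating bouquet together with a suitable number of isolated points in an open disc of $W_f-q_f(S(f))$ whose image under $\bar f$ lies in the regular value set, performs trivial normal M-bubbling operations along them one after another (legitimate by Definition~\ref{def:4} and Example~\ref{ex:3}, since every manifold involved embeds in ${\mathbb{R}}^n$ and hence fits, disjointly, inside one open disc), and reads off $H_{\ast}(W_{f^{\prime}};{\mathbb{Z}})$ from Proposition~\ref{prop:2}. The only computation needed is the dimension bookkeeping built into that proposition: a closed, connected and orientable generating manifold $S$ of dimension $d<n$ sends a class in $H_a(S;{\mathbb{Z}})$ to degree $a+n-d$ of $W_{f^{\prime}}$; in particular $H_0(S)={\mathbb{Z}}$ lands in degree $n-d$, the fundamental class lands in degree $n$, and each M-bubbling operation contributes precisely one extra ${\mathbb{Z}}$ in degree $n$ regardless of the number of wedge summands, so that the number of operations must equal ${\rm rank}_{\mathbb{Z}}G_n$.

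For item~(1) I would take the bouquet $S_1\vee S_2\vee(\text{standard spheres})$ with $S_1,S_2$ the given manifolds. Since $\dim S_i=2n-2j_i-1$, the group $H_{n-j_i-1}(S_i;{\mathbb{Z}})\cong G_{j_i}$ is carried into degree $j_i$, the class $H_0(S_i)={\mathbb{Z}}$ into degree $2j_i+1-n$ (whence the hypothesis ${\rm rank}_{\mathbb{Z}}G_{2j_i+1-n}>0$, which also forces that degree to be positive), and the vanishing assumed on the intermediate homology of $S_i$ excludes any further summand. One then appends, for each $j^{\prime}$ with $1\le j^{\prime}\le n-1$, enough copies of $S^{n-j^{\prime}}$ to complete the free part of $G_{j^{\prime}}$ — using one fewer copy in the degrees that already received a free ${\mathbb{Z}}$ from $S_1$ or $S_2$ — and finally ${\rm rank}_{\mathbb{Z}}G_n-1$ points. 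The two subcases (a) and (b) are realized by the same construction.

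For items~(2) and~(3) the extra device is the manifold $S_2$ of dimension $2n-j_1-j_2-1$ whose homology realizes $G_{j_1}$ in two degrees at once: since $H_{n-j_1-1}(S_2;{\mathbb{Z}})\cong H_{n-j_2-1}(S_2;{\mathbb{Z}})\cong G_{j_1}$, as a generating manifold it contributes one copy of $G_{j_1}$ to degree $j_1$ and one copy to degree $j_2$, while $S_1$ (of dimension $2n-2j_2-1$, resp. $2n-2j_1-1$) contributes the complementary group $G$ to degree $j_2$ (resp. to degree $j_1$). In item~(2) this makes degree $j_1$ equal to $G_{j_1}$ and degree $j_2$ equal to $G\oplus G_{j_1}\cong G_{j_2}$; the free ${\mathbb{Z}}$'s from $H_0(S_1)$ and $H_0(S_2)$ now land in degrees $2j_2+1-n$ and $j_1+j_2+1-n$, which is where the hypotheses ${\rm rank}_{\mathbb{Z}}G_{2j_2+1-n}>0$ and ${\rm rank}_{\mathbb{Z}}G_{j_1+j_2+1-n}>0$ are used; item~(3) is symmetric. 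After filling the remaining free ranks with standard spheres and adjoining the requisite points, Proposition~\ref{prop:2} should give $H_\ell(W_{f^{\prime}};{\mathbb{Z}})\cong H_\ell(W_f;{\mathbb{Z}})\oplus G_\ell$ for all $\ell$.

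The main obstacle is not conceptual but bookkeeping: one must check that the degrees $j_1$, $j_2$, $2j_i+1-n$, $j_1+j_2+1-n$, and $n-j^{\prime}$ do not collide so as to force an unwanted ${\mathbb{Z}}$ into a degree that should carry only torsion (and, when they do collide, that the quoted rank hypotheses leave exactly the right room), and one must verify that the vanishing conditions imposed on the intermediate homology of $S_1,S_2$ are sharp enough to exclude all other contributions. Once these routine checks are arranged, the proof is a direct application of Proposition~\ref{prop:2} in the style already used for Theorem~\ref{thm:2}.
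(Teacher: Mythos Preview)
Your proposal is correct and follows essentially the same route as the paper's own proof: form a single bouquet $S_1\vee S_2\vee(\text{standard spheres})$ in an open disc of $W_f-q_f(S(f))$, adjoin ${\rm rank}_{\mathbb{Z}}G_n-1$ isolated points, perform trivial normal M-bubbling operations along these generating polyhedra one after another, and read off the homology from Proposition~\ref{prop:2}. The paper's proof records the same recipe in list form (it specifies ${\rm rank}_{\mathbb{Z}}G_j$ copies of $S^{n-j}$ for $j\neq n-\dim S_1,\,n-\dim S_2$ and ${\rm rank}_{\mathbb{Z}}G_j-1$ copies when $j=n-\dim S_i$), so your dimension bookkeeping, including the identification $n-\dim S_i=2j_i+1-n$ (resp.\ $j_1+j_2+1-n$) that explains the rank hypotheses, is exactly the calculation underlying the paper's choice of spheres.
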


We introduce an important class of manifolds. They are also used in \cite{kitazawa5}.
Let $k_1,k_2>0$ be integers.
For a closed and connected orientable $k_1$-dimensional manifold $S$, we can obtain a new manifold by gluing the two manifolds.
\begin{enumerate}
\item The product of  a compact manifold obtained by removing the interior of a smoothly embedded standard closed $k$-dimensional disc $D$ and $S^{k_2-1}=\partial D^{k_2}$
\item $D^{k_2} \times \partial D$.
\end{enumerate}
We glue them together by a product of  a diffeomorphisms on $\partial D^{k_2}$ and $\partial D$. We denote a manifold obtained by this procedure by $S_{(k_1,k_2)}$.
\begin{Lem}
\label{lem:1}
Let $a>0$ be an integer. If for a {\rm (}$k_1=2a+1${\rm )}-dimensional manifold $S$, $H_{j}(S;\mathbb{Z})$ is trivial for $0<j<a$ and $a<j<2a+1$, then $H_{j}(S_{(k_1,k_2)};\mathbb{Z})$ is trivial for $0<j<a$, $a<j<a+k_2-1$ and $a+k_2-1<j<k_1$ and two groups $H_{a}(S_{(k_1,k_2)};\mathbb{Z})$ and $H_{a+k_2-1}(S_{(k_1,k_2)};\mathbb{Z})$ are isomorphic to $H_{a}(S;\mathbb{Z})$
\end{Lem}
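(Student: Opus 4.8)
The plan is to compute $H_\ast(S_{(k_1,k_2)};\mathbb{Z})$ by a Mayer--Vietoris argument along the decomposition of $S_{(k_1,k_2)}$ into the two pieces of the construction. Write $S_0:=S\setminus{\rm Int}\,D$, so that $k_1=2a+1$, $\partial S_0=S^{k_1-1}$, and $S_{(k_1,k_2)}$ is the union of (slightly thickened) open sets $A\simeq S_0\times S^{k_2-1}$ and $B\simeq D^{k_2}\times S^{k_1-1}$ with $A\cap B\simeq S^{k_1-1}\times S^{k_2-1}$. (Equivalently, $S_{(k_1,k_2)}$ is the result of surgery on $S\times S^{k_2-1}$ along $\{c\}\times S^{k_2-1}$, $c$ the centre of $D$, with trivial framing; I would use this only as guiding intuition.) Since the two pieces are glued by a product of diffeomorphisms of $S^{k_2-1}$ and of $S^{k_1-1}$, and each of these acts by $\pm 1$ on the top homology of its sphere and trivially in lower positive degrees, the gluing has no effect on the homology maps below, and I will suppress it.

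First I would record the homology of the three pieces. Because spheres have free homology, the K\"unneth theorem gives $H_\ast(B)\cong H_\ast(S^{k_1-1})$, $H_\ast(A\cap B)\cong H_\ast(S^{k_1-1})\otimes H_\ast(S^{k_2-1})$ and $H_\ast(A)\cong H_\ast(S_0)\otimes H_\ast(S^{k_2-1})$, so everything reduces to $H_\ast(S_0)$. The long exact sequence of the pair $(S,S_0)$ together with excision, $H_\ast(S,S_0)\cong\tilde H_{\ast-1}(S^{k_1-1})$, shows $H_j(S_0)\cong H_j(S)$ for $j\leq k_1-1$ and $H_{k_1}(S_0)=0$. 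Now I invoke the hypotheses: $H_j(S;\mathbb{Z})=0$ for $0<j<a$ and $a<j<2a+1$, together with $S$ closed, connected and orientable. By Poincar\'e duality and the universal coefficient theorem, $H_{a+1}(S;\mathbb{Z})\cong H^a(S;\mathbb{Z})$ has the free part of $H_a(S;\mathbb{Z})$ as a direct summand; since $H_{a+1}(S;\mathbb{Z})=0$, the group $T:=H_a(S;\mathbb{Z})$ is torsion. Hence $H_\ast(S_0;\mathbb{Z})$ is $\mathbb{Z}$ in degree $0$, $T$ in degree $a$, and $0$ in every other degree $\leq k_1$.

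Next I would run Mayer--Vietoris and track the map $\phi_j\colon H_j(A\cap B)\to H_j(A)\oplus H_j(B)$. The group $H_\ast(A\cap B)$ is free, concentrated in degrees $0$, $k_2-1$, $k_1-1$, $k_1+k_2-2$, generated by the evident products of point classes and fundamental classes. The two facts I need are: (a) $1\times[S^{k_2-1}]$ maps to a generator of the free summand $H_0(S_0)\otimes H_{k_2-1}(S^{k_2-1})$ of $H_{k_2-1}(A)$ and to $0$ in $H_{k_2-1}(B)$; and (b) $[S^{k_1-1}]\times 1$ maps to a generator of $H_{k_1-1}(B)\cong\mathbb{Z}$ and to $0$ in $H_{k_1-1}(A)$, the latter because $[S^{k_1-1}]=[\partial S_0]$ dies in $H_{k_1-1}(S_0)\cong H_{k_1-1}(S;\mathbb{Z})=0$. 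Both are immediate from the description of $H_\ast(S_0)$ above. Since the remaining two generators sit in degree $0$ and in degree $k_1+k_2-2$, this forces $\phi_j$ to be injective for all $0<j<k_1$, so $H_j(S_{(k_1,k_2)};\mathbb{Z})\cong{\rm coker}\,\phi_j$ in that range (and a direct check also settles the single degree $k_1+k_2-2\geq a+k_2-1$, which exceeds $k_1-1$ only when $k_2>a+1$).

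Finally, using $H_j(A)\cong H_j(S_0)\oplus H_{j-(k_2-1)}(S_0)$ and $H_j(B)\cong H_j(S^{k_1-1})$, I would read off ${\rm coker}\,\phi_j$ degree by degree: the free class in degree $k_2-1$ coming from $H_0(S_0)$ and the free class $H_{k_1-1}(B)$ are exactly the images killed by (a) and (b), while the only surviving summands are $H_a(S_0)=T$ in degree $a$ (from the factor $H_j(S_0)$) and $H_a(S_0)=T$ in degree $a+k_2-1$ (from the factor $H_{j-(k_2-1)}(S_0)$), and $0$ in every other degree with $0<j<k_1$. This is exactly the assertion: $H_a(S_{(k_1,k_2)};\mathbb{Z})$ and $H_{a+k_2-1}(S_{(k_1,k_2)};\mathbb{Z})$ are both isomorphic to $H_a(S;\mathbb{Z})$, and the intermediate and the other low degrees vanish. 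I expect the delicate point to be the Mayer--Vietoris bookkeeping, above all (b) --- that the fundamental class of $\partial S_0$ vanishes in $H_{k_1-1}(S_0)$ --- which is where the hypothesis on the upper half of $H_\ast(S)$ is used essentially and, via Poincar\'e duality, is also what forces $T$ to be torsion so that no spurious free summand appears in degree $a+1$; the coincidence cases $k_2=a+1$ (where the degrees $k_2-1$ and $a$, resp.\ $a+k_2-1$ and $k_1-1$, collide) and large $k_2$ have to be verified separately but cause no trouble.
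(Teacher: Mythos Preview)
The paper states Lemma~\ref{lem:1} without proof, passing directly from the statement to the proof of Theorem~\ref{thm:4}; so there is no argument in the paper to compare against. Your Mayer--Vietoris computation along the defining decomposition is the natural way to establish the lemma, and the two facts you single out --- the description of $H_\ast(S_0)$ via the long exact sequence of the pair $(S,S_0)$, and the vanishing of $[\partial S_0]$ in $H_{k_1-1}(S_0)\cong H_{k_1-1}(S;\mathbb{Z})=0$ --- are precisely what drives the bookkeeping.

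Two minor remarks. First, your cokernel reading is phrased for $0<j<k_1$, but when $k_2>a+1$ the degree $a+k_2-1$ (and part of the claimed vanishing range $a<j<a+k_2-1$) lies at or above $k_1$; the same Mayer--Vietoris analysis extends to those degrees without any new input, since $H_\ast(A\cap B)$, $H_\ast(A)$ and $H_\ast(B)$ remain concentrated exactly where you say, but you should cover that range explicitly rather than only flagging the single top degree $k_1+k_2-2$. Second, the observation that $T=H_a(S;\mathbb{Z})$ is torsion is correct but not actually needed: the vanishing of $H_{a+1}(S;\mathbb{Z})$ is already among the hypotheses, so no appeal to Poincar\'e duality for $S$ is required to prevent a ``spurious'' free summand.
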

\begin{proof}[Proof of Theorem \ref{thm:4}]
Each statement is shown by taking a bouquet of the following closed submanifolds in an open disc in $W_f-q_f(S(f))$ and ${\rm rank}_{\mathbb{Z}} G_n-1$ points and applying Proposition \ref{prop:2}. For the bouquet, we take the following manifolds.
\begin{enumerate}
\item $S_1$.
\item $S_2$.
\item ${\rm rank}_{\mathbb{Z}} G_j$ copies of the ($n-j$)-dimensional standard sphere $S^{n-j}$ ($1 \leq j \leq n-1$ with $j \neq n-\dim S_1,n- \dim S_2$).
\item ${\rm rank}_{\mathbb{Z}} G_j-1$ copies of the ($n-j$)-dimensional standard sphere $S^{n-j}$ ($j = n-\dim S_1,n- \dim S_2$).
\end{enumerate}
We perform trivial M-bubbling operations whose generating polyhedra are the bouquet or the points one after another.
\end{proof}
\begin{Ex}
Let $n \geq 5$ and $(j_1,j_2)=(2,3)$ in the situation of Theorem \ref{thm:4}. We
 consider a case where the condition of the second statement holds: let $S_1$ be a $3$-dimensional closed, connected and orientable manifold and a Lens
 space whose 1st homology group is isomorphic to $G$ and let $S_2$ be a manifold which is, for a Lens space $S$ whose 1st homology group is isomorphic to $G_1$, diffeomorphic to $S_{(3,1)}$. Thus we may apply Theorem \ref{thm:4}.
\end{Ex}

\subsection{Several general statements}
We can show a general statement under the assumption that for several $G_j$ the torsion subgroups are non-trivial.
For an ordered set $A$, we denote the maximum and the minimum by $\max A$ and $\min A$, respectively.
\begin{Thm}
\label{thm:5}
Let $k>0$ be an integer and $\{A_j\}_{j=1}^{k}$ be a family of finite and non-empty set of integers satisfying the following.
\begin{enumerate}
\item Integers in $A_j$ are represented as powers of prime numbers.
\item $A_{j_1}$ and $A_{j_2}$ are disjoint for $j_1 \neq j_2$.
\end{enumerate}

In Problem in the introduction, assume that we can construct a map $f^{\prime}$ satisfying the condition.
 Let $I_{G,A_i}$ be the set of all numbers $j$ such that there exists an integer $p \in A_i$ and that ${\mathbb{Z}}_p$ is regarded as a summand of $G_j$ in Fact \ref{fact:3}. We assume that the following hold.
\begin{enumerate}
\item $I_{G,A_i}$ is non-empty for all $i$.
\item For any integers $1 \leq j_1 \leq j_2 \leq k$, the inequality $\max I_{G,A_{j_1}}<\min I_{G,A_{j_2}}$ holds.
\end{enumerate}
Then for each integer $1 \leq j \leq k$, for some integer $2\min I_{G,A_{j}}-n+1 \leq j^{\prime} \leq 2\max I_{G,A_{j}}-n+1$, ${\rm rank}_{\mathbb{Z}} G_{j^{\prime}}$ is positive. Moreover, at
 least $k>0$ of  $G_{j}$ are of ${\rm rank}_{\mathbb{Z}}G_j>0$ 
\end{Thm}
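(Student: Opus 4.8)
The plan is to reduce the statement, through Proposition \ref{prop:2} and its iteration, to a single well-chosen generating submanifold of one of the bubbling operations producing $f^{\prime}$, and then to trap the dimension of that submanifold between two bounds read off from $\min I_{G,A_j}$ and $\max I_{G,A_j}$.

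First I would iterate Proposition \ref{prop:2} along the finite sequence of bubbling operations: this writes each $G_i$ with $0\le i<n$ as a finite direct sum $\bigoplus_S H_{i-(n-\dim S)}(S;\mathbb{Z})$, the sum running over all closed, connected and orientable submanifolds $S$ that occur in the bouquets of generating polyhedra along the iteration, while $G_n$ is free. In particular $G_0=\{0\}$, $G_{n-1}$ is free by Corollary \ref{cor:1}, and $G_n$ is free, so $I_{G,A_j}\subseteq\{1,\dots,n-2\}$. Now fix $j$, put $a=\min I_{G,A_j}$ and $b=\max I_{G,A_j}$, and pick a prime power $p\in A_j$ with $\mathbb{Z}_p$ a summand of $G_a$ in the sense of Fact \ref{fact:3}. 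Since the multiplicity of $\mathbb{Z}_p$ in the canonical decomposition of a finitely generated abelian group is additive over direct sums, some $S$ appearing above --- of dimension $d$, say --- has $\mathbb{Z}_p$ as a summand of $H_\ell(S;\mathbb{Z})$ with $\ell=a-(n-d)$. This torsion being nontrivial and $S$ being closed and connected, $1\le\ell<d$, so $d\ge n-a+1$; and $d\le n-1$ because $S$ has positive codimension in the regular value set.

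The key step is a Poincar\'e-duality ``bounce''. Since $S$ is closed, connected and orientable of dimension $d$, Poincar\'e duality together with the universal coefficient theorem identifies the torsion subgroup of $H_\ell(S;\mathbb{Z})$ with that of $H_{d-\ell-1}(S;\mathbb{Z})$, and $d-\ell-1=n-a-1$; so $\mathbb{Z}_p$ is also a summand of $H_{n-a-1}(S;\mathbb{Z})$. By Proposition \ref{prop:2} this exhibits $\mathbb{Z}_p$ as a summand of $G_{(n-a-1)+(n-d)}=G_{2n-a-d-1}$, whence $2n-a-d-1\in I_{G,A_j}$ since $p\in A_j$, and therefore $a\le 2n-a-d-1\le b$, i.e. $2n-a-b-1\le d\le 2n-2a-1$. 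On the other hand $H_0(S;\mathbb{Z})=\mathbb{Z}$ contributes, again by Proposition \ref{prop:2}, a free summand to $G_{n-d}$ (legitimate since $0<n-d<n$), so $\mathrm{rank}_{\mathbb{Z}}G_{n-d}>0$. The two bounds on $d$ translate into $2a-n+1\le n-d\le a+b-n+1\le 2b-n+1$, the last step using $a\le b$, so $j^{\prime}=n-d$ lies in $[\,2\min I_{G,A_j}-n+1,\ 2\max I_{G,A_j}-n+1\,]$ and has positive rank, which is the first assertion.

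For the concluding sentence I would note that, as $\max I_{G,A_{j_1}}<\min I_{G,A_{j_2}}$ whenever $j_1<j_2$, the intervals $[\,2\min I_{G,A_j}-n+1,\ 2\max I_{G,A_j}-n+1\,]$ are pairwise disjoint and strictly increasing in $j$; hence the indices $j^{\prime}$ produced above for $j=1,\dots,k$ are pairwise distinct, and at least $k$ of the groups $G_j$ have positive rank. I expect the main obstacle to be conceptual rather than computational: one must recognise that the pairwise disjointness of the prime-power sets $A_j$ is exactly what confines the Poincar\'e-dual torsion class to the same block $I_{G,A_j}$ --- without it the dual class could move into another block and the two-sided pinch on $d$ would collapse --- and one must check that ``$\mathbb{Z}_p$ is a summand of $G_i$'' really is detected on a single summand of the decomposition given by Proposition \ref{prop:2} and survives the uniqueness in Fact \ref{fact:3}.
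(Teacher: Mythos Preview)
Your proof is correct and follows essentially the same strategy as the paper's: locate a generating submanifold $S$ carrying the relevant $\mathbb{Z}_p$-torsion via the iterated Proposition~\ref{prop:2}, use Poincar\'e duality with the universal coefficient theorem to force the dual torsion class back into the same block $I_{G,A_j}$, bound $\dim S$ accordingly, and read off the positive-rank index as $j'=n-\dim S$; the final disjointness-of-intervals argument for the ``at least $k$'' clause is also the paper's. Your presentation is in fact more explicit than the paper's sketch---you fix $\ell$ so that it contributes at $a=\min I_{G,A_j}$ and obtain the slightly sharper intermediate bound $n-d\le a+b-n+1$, whereas the paper just adds the two inequalities $\min I_{G,A_j}\le \ell+(n-d)\le\max I_{G,A_j}$ and $\min I_{G,A_j}\le n-\ell-1\le\max I_{G,A_j}$ to reach the stated range directly---but the underlying mechanism is identical.
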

\begin{proof}
From the situation, there exists a bubbling operation in the family of  the bubbling operations to obtain $f^{\prime}$ from $f$ such that the generating polyhedron is represented as a bouquet
 of manifolds and that a summand of the torsion subgroup of a homology group of a manifold in the family of the manifolds is isomorphic to ${\mathbb{Z}}_p$ for $p \in A_j$ for each integer $1 \leq j \leq k$. The dimension
 of the manifold is larger than $n-\max I_{G,A_{j}}+(n-\max I_{G,A_{j}}-1)$ and smaller
 than $n-\min I_{G,A_{j}}+(n-\min I_{G,A_{j}}-1)$ by the assumption and by virtue of Poincar\'e duality theorem together with the universal coefficient theorem. Proposition \ref{prop:2} gives the
 desired result: $j^{\prime}$ in the statement satisfies the relation $n-(n-\min I_{G,A_{j}}+(n-\min I_{G,A_{j}}-1)) \leq j^{\prime} \leq n-(n-\max I_{G,A_{j}}+(n-\max I_{G,A_{j}}-1))$. The last statement follows from the latter two listed conditions. 
\end{proof}
\begin{Thm}
\label{thm:6}
In Problem in the introduction, assume that we can construct a map $f^{\prime}$ satisfying the conditions we will pose.
Let $k>0$ be an integer. Let $\{A_j\}_{j=1}^{k}$ be a family of finite and non-empty set of integers $1 \leq j^{\prime} \leq n-1$ and $\{H_j\}_{j=1}^{k}$ be a family of finite commutative groups satisfying the following.
\begin{enumerate}
\item For distinct integers $1 \leq j_1 \leq j_2 \leq k$, $H_{j_1}$ and $H_{j_2}$ are not isomorphic. 
\item For distinct integers $1 \leq j_1 \leq j_2 \leq k$, $A_{j_1}$ and $A_{j_2}$ are distinct. 
\item $A_{j}$ is the set of all integers $j^{\prime}$ such that $H_j$ is isomorphic to a summand of $G_{j^{\prime}}$ as in Fact \ref{fact:3}. Furthermore, if $G_{j^{\prime}}$ is represented as the direct sum of $H_j$ and a group ${H^{\prime}}_{j^{\prime}}$, for the family of all summands of $TH_j$ and that of $T{H^{\prime}}_{j^{\prime}}$ obtained as in Fact \ref{fact:3}, no pair of a summand in the former family and one in the latter family is isomorphic.  
\item For each $j^{\prime} \in A_{j}$, there exists just one subgroup isomorphic to $H_j$ in $G_{j^{\prime}}$ and for each integer $j^{\prime}$ not in $A_{j}$, there exists no group isomorphic to a summand of $H_j$ as a summand of $G_{j^{\prime}}$ as in Fact \ref{fact:3}.
\item For each $A_{j}$, the cardinality or the number of all elements is odd.
\end{enumerate}

 Then for each integer $1 \leq j \leq k$, for some integer $j^{\prime} \in A_j$, ${\rm rank}_{\mathbb{Z}} G_{2j^{\prime}-n+1}$ is positive. Moreover, at
 least $k>0$ of  $G_{j}$ are of ${\rm rank}_{\mathbb{Z}}G_j>0$ 
%
%In Problem in the introduction, assume that we can construct a map $f^{\prime}$ satisfying the condition.
% Let $I_{G,A_i}$ be the set of all numbers $j$ such that there exists an integer $p \in A_i$ and that ${\mathbb{Z}}_p$ is %regarded as a summand of $G_j$ in Fact \ref{fact:3}. We assume that the following hold.
%\begin{enumerate}
%\item $I_{G,A_i}$ is non-empty for all $i$.
%\item For any integers $1 \leq j_1 \leq j_2 \leq k$, the inequality $\max I_{G,A_{j_1}}<\min I_{G,A_{j_2}}$ holds.
%\end{enumerate}
%Then for each integer $1 \leq j \leq k$, for some integer $2\min I_{G,A_{j}}-n+1 \leq j^{\prime} \leq 2\max I_{G,A_{j}}-n+1$, ${%\rm rank}_{\mathbb{Z}} G_{j^{\prime}}$ is positive. Moreover, at
 %least $k>0$ of  $G_{j}$ are of ${\rm rank}_{\mathbb{Z}}G_j>0$ 
\end{Thm}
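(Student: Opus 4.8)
The plan is to run the Poincar\'e-duality bookkeeping behind Theorems \ref{thm:1}--\ref{thm:5}, but to upgrade it with a parity count that uses the oddness of the cardinalities of the $A_j$ to force a torsion class to occupy the \emph{middle} dimension of one of the generating submanifolds; that middle position is exactly what pins the dimension of the submanifold and therefore forces the claimed positive free rank. First I would record a normal form for the groups $G_{j'}$: writing the iteration producing $f^{\prime}$ as $f=f_0\to f_1\to\cdots\to f_r=f^{\prime}$ and applying Proposition \ref{prop:2} at each step, one obtains for every $0\le j'<n$ an isomorphism
$$G_{j'}\cong\bigoplus_{(b,i)}H_{j'-(n-\dim S_i^{b})}(S_i^{b};\mathbb{Z}),$$
the sum running over the operations $b$ in the iteration and over the closed, connected and orientable submanifolds $S_i^{b}$ constituting the generating polyhedron of $b$; cancellation of finitely generated commutative groups makes this identification unambiguous. (In degree $n$ each M-bubbling operation contributes a copy of $\mathbb{Z}$, so $G_n$ is free, consistently with $n\notin A_j$.)

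Fix $1\le j\le k$. I would pick a prime power $q$ with $\mathbb{Z}_q$ a summand of $H_j$ as in Fact \ref{fact:3}, and denote by $\mu_q(\cdot)$ the multiplicity of $\mathbb{Z}_q$ as a summand in that sense. Conditions (3) and (4) are tailored precisely so that $\mathbb{Z}_q$ occurs as a summand of $G_{j'}$ exactly when $j'\in A_j$, and then with multiplicity $\mu_q(H_j)$; hence $\sum_{j'}\mu_q(TG_{j'})=\mu_q(H_j)\cdot|A_j|$, which by condition (5) is odd for a suitable choice of $q$. On the other hand, each $S_i^{b}$ is a closed, connected and orientable manifold, so Poincar\'e duality together with the universal coefficient theorem gives $TH_\ell(S_i^{b};\mathbb{Z})\cong TH_{\dim S_i^{b}-1-\ell}(S_i^{b};\mathbb{Z})$ for all $\ell$. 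Consequently, for each $(b,i)$, the total multiplicity of $\mathbb{Z}_q$ in $\bigoplus_\ell H_\ell(S_i^{b};\mathbb{Z})$ is even unless $\dim S_i^{b}$ is odd, in which case its parity equals that of $\mu_q$ of the middle homology $H_{(\dim S_i^{b}-1)/2}(S_i^{b};\mathbb{Z})$. Regrouping the double sum $\sum_{j'}\sum_{(b,i)}$ as $\sum_{(b,i)}\sum_\ell$ and comparing the two computations forces the existence of a generating submanifold $S$ of some odd dimension $d$ with $\mathbb{Z}_q$ a summand of $H_{(d-1)/2}(S;\mathbb{Z})$.

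From this point the conclusion follows by the now-familiar move from Theorems \ref{thm:1} and \ref{thm:5}. Put $j':=n-\tfrac{d+1}{2}$, so that $d=2n-2j'-1$; the middle class of $S$ lies in the summand of $G_{j'}$ indexed by $S$, so $\mathbb{Z}_q$ is a summand of $G_{j'}$ and condition (4) forces $j'\in A_j$. The same submanifold $S$ contributes its degree-zero homology $H_0(S;\mathbb{Z})\cong\mathbb{Z}$ to the summand of $G_{n-d}$ indexed by $S$; since $S$ is embedded in an open $n$-disc we have $d\le n-1$, whence $n-d=2j'-n+1\ge 1$ and ${\rm rank}_{\mathbb{Z}}G_{2j'-n+1}>0$. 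This proves the first assertion. For the last sentence I would apply the first assertion to every $1\le j\le k$, obtaining $j'_{j}\in A_j$ with ${\rm rank}_{\mathbb{Z}}G_{2j'_{j}-n+1}>0$, and then argue --- using that the $A_j$ are pairwise distinct, the $H_j$ pairwise non-isomorphic, and conditions (3)--(4) make each $H_j$ a multiplicity-one, prime-power-type-exhausting summand of the relevant $G_{j'}$ --- that the extracted submanifold dimensions (equivalently, the degrees $2j'_{j}-n+1$) cannot all collapse onto fewer than $k$ values; this is the exact analogue of the disjointness of intervals used in the proof of Theorem \ref{thm:5}.

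The hard part will be the parity step: one must verify that the algebraic hypotheses (3), (4) and (5) genuinely force $\sum_{j'}\mu_q(TG_{j'})$ to be odd for a suitably chosen prime power $q$, and that torsion in the groups $G_{j'}$ not originating from any $H_j$ cannot disturb the Poincar\'e-duality count modulo $2$. Once the count is secured, the geometric conclusion is the routine observation already underlying Theorems \ref{thm:1}--\ref{thm:5}: a torsion class of a generating submanifold that is \emph{forced into that submanifold's middle dimension} is necessarily accompanied, one appropriate degree lower in the Reeb space, by the free class coming from the bottom homology of the submanifold.
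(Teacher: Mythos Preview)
Your plan and the paper's proof follow the same blueprint: decompose each $G_{j'}$ via Proposition~\ref{prop:2}, then use Poincar\'e duality with the universal coefficient theorem together with the odd cardinality of $A_j$ (condition~(5)) to force some generating submanifold to carry the relevant torsion in its \emph{middle} degree, which pins its dimension to $2n-2j'-1$ and lets its $H_0$ contribute a free $\mathbb{Z}$ to $G_{2j'-n+1}$. The one substantive difference is where the parity is taken. The paper first invokes the ``connected sum instead of bouquet'' device (the omitted key step of Proposition~\ref{prop:5}) to arrange that all of $H_j$ sits inside the homology of a \emph{single} generating manifold, and then applies the $\ell\leftrightarrow\dim S-1-\ell$ symmetry to the set of degrees of that one manifold hosting $H_j$. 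You instead keep all generating submanifolds separate and run a global mod-$2$ count of the multiplicity $\mu_q$ of a fixed prime-power summand $\mathbb{Z}_q$ of $H_j$. Your route is tidier in that it avoids the connected-sum reorganisation, but it carries exactly the obligation you flag: one needs a $q$ with $\mu_q(H_j)$ odd, and examples like $H_j\cong\mathbb{Z}_2\oplus\mathbb{Z}_2$ show this is not automatic, so you would have to upgrade the scalar $\mu_q$ to a parity invariant detecting the whole block $H_j$ at once --- which is, in effect, what the paper's connected-sum step buys. Neither the paper nor your sketch actually pins down the final clause (``at least $k$ of the $G_j$ have positive rank''); your analogy with Theorem~\ref{thm:5} is only heuristic, since there the sets $A_j$ are separated by an order hypothesis that is absent here.
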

\begin{proof}
From the situation, for each integer $1 \leq j \leq k$, there exists at least one bubbling operation in the family of  the bubbling operations to obtain $f^{\prime}$ from $f$ such that the generating polyhedron is represented as a bouquet
 of manifolds and that the torsion subgroup of a homology group of a manifold in the family of the manifolds can be regarded as a group having a summand isomorphic to $H_j$: if there exists no such manifold, then we can obtain such a manifold by considering "a connected sum" instead of "a bouquet": note that this idea is a key ingredient in the proof of Proposition \ref{prop:5}, which is omitted. The dimension
 of at least one of manifolds whose homology groups are like this must be $(n-j^{\prime})+(n-j^{\prime}+1)$ for some $j^{\prime} \in A_j$ by the assumption and by virtue of Poincar\'e duality theorem together with the universal coefficient theorem: the most important ingredient is the fifth condition of the numbered conditions with the definition of $A_j$.
\end{proof}
\begin{Ex}
In the situation of Problem, let $n$ be sufficiently large and a fold map $f$ be given. Let $S_1$ be a $5$-dimensional closed, simply-connected and orientable manifold whose second homology group $H_2(S_1;\mathbb{Z})$ has a non-trivial torsion subgroup $TH_2(S_1;\mathbb{Z})$, appearing in \cite{barden}, and let $S_2$ be a manifold which is, for a Lens space $S$ whose 1st homology group is isomorphic to a group $G$, diffeomorphic to $S_{(3,2)}$. We also assume that for the family of all summands of the group $TH_2(S_1;\mathbb{Z})$ and that of the group $G$, no pair of a summand in the former family and one in the latter family is isomorphic. By performing a normal bubbling operation whose generating manifold is diffeomorphic to $S_1$ to $f$ and after the operation, performing one whose generating manifold is diffeomorphic to $S_2$, we have a map $f^{\prime}$ satisfying the assumption of Theorem \ref{thm:6} with $k=1$, $A_1$=\{n-3\} and $H_2(S_1;\mathbb{Z})=H_1$. Knowing more precise information on the situation such as groups $\{G_j\}$ is left to readers as an exercise.
\end{Ex}

%\begin{proof}
%From the situation, there exists a bubbling operation in the family of  the bubbling operations to obtain $f^{\prime}$ from $f$ such that the generating polyhedron is represented as a bouquet
% of manifolds and that a summand of the torsion subgroup of a homology group of a manifold in the family of the manifolds is isomorphic to ${\mathbb{Z}}_p$ for $p \in A_j$ for each integer $1 \leq j \leq k$. The dimension
% of the manifold is larger than $n-\max I_{G,A_{j}}+(n-\max I_{G,A_{j}}-1)$ and smaller
 %than $n-\min I_{G,A_{j}}+(n-\min I_{G,A_{j}}-1)$ and  by virtue of Poincar\'e duality theorem. Proposition \ref{prop:2} gives the
% desired result: $j^{\prime}$ in the statement satisfies the relation $n-(n-\min I_{G,A_{j}}+(n-\min I_{G,A_{j}}-1)) \leq j^{\prime} %\leq n-(n-\max I_{G,A_{j}}+(n-\max I_{G,A_{j}}-1))$. The last statement follows from the latter two listed conditions. 
%\end{proof}
\subsection{A case where there exist just three $G_j$ whose torsion-groups are non-trivial satisfying additional appropriate conditions in Problem in the introduction}
\begin{Thm}
\label{thm:7}
In Problem in the introduction, let there exist just three groups $G_{j_1}$, $G_{j_2}$ and $G_{j_3}$ {\rm (}$1 \leq j_1<j_2<j_3 \leq n-1${\rm )} in the family of the groups whose
 torsion subgroups are non-trivial and suppose that at least one of the following holds.
\begin{enumerate}
\item The two torsion subgroups $T_{G_{j_1}} \subset G_{j_1}$ and $T_{G_{j_3}} \subset G_{j_3}$ are not isomorphic.
\item $2j_2 \neq j_1+j_3$.
\end{enumerate}
 Assume also that we can construct a map $f^{\prime}$ satisfying the condition of Problem. Then there must be at least $2$ numbers $1 \leq j \leq n-1$ such that ${\rm rank}_{\mathbb{Z}} G_j>0$ hold.

\end{Thm}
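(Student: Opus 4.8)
The plan is to trace every torsion summand of the modules $G_j$ back to the homology of the generating polyhedra of the bubbling operations, as in the proofs of Theorems \ref{thm:1}, \ref{thm:3} and \ref{thm:5}, and then to run a dimension count against the combinatorial rigidity of three degrees that must be matched in ``duality pairs''. Here $R=\mathbb{Z}$. By Proposition \ref{prop:2}, applied to each operation of the iteration producing $f^{\prime}$ and then iterated, for $1\le j\le n-1$ the module $G_j$ is isomorphic to $\bigoplus_{\ell}H_{j-n+d_{\ell}}(S_{\ell};\mathbb{Z})$, where $S_{\ell}$ runs over all the closed, connected and orientable submanifolds occurring in the bouquets that serve as generating polyhedra and $d_{\ell}=\dim S_{\ell}$; hence $TG_j\cong\bigoplus_{\ell}TH_{j-n+d_{\ell}}(S_{\ell};\mathbb{Z})$. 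Two observations follow: (a) every $S_{\ell}$ with $d_{\ell}\ge 1$ contributes the summand $H_0(S_{\ell};\mathbb{Z})\cong\mathbb{Z}$ to $G_{n-d_{\ell}}$, so ${\rm rank}_{\mathbb{Z}}G_{n-d_{\ell}}>0$ and $1\le n-d_{\ell}\le n-1$; (b) since $G_{j_1}$ has non-trivial torsion, at least one $S_{\ell}$ satisfies $TH_{\bullet}(S_{\ell};\mathbb{Z})\neq 0$, and any such $S_{\ell}$ has $d_{\ell}\ge 3$ because a closed orientable manifold of dimension at most $2$ has torsion-free homology.

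I would then argue by contradiction: suppose the conclusion fails, so that at most one integer $j\in\{1,\dots,n-1\}$ satisfies ${\rm rank}_{\mathbb{Z}}G_j>0$. By (a) and (b) all the torsion-carrying $S_{\ell}$ must then share a single dimension $d$. For such a $d$-manifold, the Poincar\'e duality theorem together with the universal coefficient theorem (as in Corollary \ref{cor:1}) yields $TH_k(S_{\ell};\mathbb{Z})\cong TH_{d-1-k}(S_{\ell};\mathbb{Z})$; since $H_k(S_{\ell})$ contributes to $G_{n-d+k}$ while $H_{d-1-k}(S_{\ell})$ contributes to $G_{n-1-k}$, whenever $S_{\ell}$ contributes a non-trivial torsion summand to $G_a$ it contributes an isomorphic torsion summand to $G_{\sigma-a}$, where $\sigma:=2n-1-d$ does not depend on $\ell$. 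Since $\{\,j\in\{1,\dots,n-1\}:TG_j\neq 0\,\}=\{j_1,j_2,j_3\}$, this forces $\sigma-j_i\in\{j_1,j_2,j_3\}$ for $i=1,2,3$.

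The remaining ingredient is an elementary arithmetic lemma: for $j_1<j_2<j_3$, there is an integer $\sigma$ with $\sigma-j_i\in\{j_1,j_2,j_3\}$ for all three $i$ only if $2j_2=j_1+j_3$, and in that case necessarily $\sigma=j_1+j_3$. Since moreover $\sigma-j_1=j_3$, every torsion-carrying $S_{\ell}$ that contributes a summand $TH_{j_1-n+d}(S_{\ell};\mathbb{Z})$ to $TG_{j_1}$ contributes the isomorphic summand $TH_{j_3-n+d}(S_{\ell};\mathbb{Z})=TH_{d-1-(j_1-n+d)}(S_{\ell};\mathbb{Z})$ to $TG_{j_3}$, and conversely; summing over $\ell$ gives $T_{G_{j_1}}\cong T_{G_{j_3}}$. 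Thus both $2j_2=j_1+j_3$ and $T_{G_{j_1}}\cong T_{G_{j_3}}$ would hold, contradicting the hypothesis that at least one of them fails. Hence at least two integers $j\in\{1,\dots,n-1\}$ satisfy ${\rm rank}_{\mathbb{Z}}G_j>0$.

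The step I expect to be most delicate is the middle one: one must keep careful track of which torsion summand of which generating submanifold lands in which $G_j$, and must check through Poincar\'e duality and the universal coefficient theorem that the partner summand in the dual degree is genuinely isomorphic to it rather than merely of the same order, so that the direct-sum decompositions of $T_{G_{j_1}}$ and $T_{G_{j_3}}$ can be matched term by term. Once that is in hand, the arithmetic lemma and the closing dimension count are routine.
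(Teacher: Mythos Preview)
Your argument is correct and follows essentially the same route as the paper's proof: assume for contradiction that at most one $j\in\{1,\dots,n-1\}$ has ${\rm rank}_{\mathbb{Z}}G_j>0$, force all torsion to come from generating manifolds of a single dimension $d$, and then use Poincar\'e duality together with the universal coefficient theorem to derive $2j_2=j_1+j_3$ and $T_{G_{j_1}}\cong T_{G_{j_3}}$. The only presentational difference is that the paper collapses everything to a single closed orientable manifold $S$ (implicitly via the connected-sum trick invoked in the proof of Theorem~\ref{thm:6}), whereas you keep the individual $S_\ell$ separate and sum their contributions; your bookkeeping is a bit more explicit, but the substance and the arithmetic endgame are identical.
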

\begin{proof}
Let the number of $j$ such that ${\rm rank}_{\mathbb{Z}} G_j>0$ hold be $0$ or $1$. From Proposition \ref{prop:2}, it must be $1$. Moreover, we can take the generating polyhedron as a closed and orientable manifold $S$ and for the homology group of the generating manifold, there exist just three numbers ${j_1}^{\prime}=j_1-(n-\dim S)$, ${j_2}^{\prime}=j_2-(n-\dim S)$
 and ${j_3}^{\prime}=j_3-(n-\dim S)$ such that the torsion subgroup $T H_{{j_{j^{\prime}}}^{\prime}}(S;\mathbb{Z})$ is isomorphic to $T_{G_{j_{j^{\prime}}}}$ ($j^{\prime}=1,2,3$).

 If the first condition of the numbered two conditions holds, then the Poincar\'e duality theorem together with the universal coefficient theorem implies that this is a contradiction. If the second condition holds and the first condition does not hold, then the assumption on the torsion subgroups of the three homology groups also contradicts the
 Poincar\'e duality theorem together with the universal coefficient theorem. 

We give a more precise explanation on the latter case. The relation ${j_1}^{\prime}=\dim S-{j_3}^{\prime}-1$ must hold by virtue of the Poincar\'e duality theorem together with the universal coefficient theorem and as a result the relation $j_1-(n-\dim S)=\dim S-(j_3-(n-\dim S))-1$ holds and we also have the relation $\dim S=2n-j_1-j_3-1$. From the Poincar\'e duality theorem, we also have the relation $2{j_2}^{\prime}+1=2(j_2-(n-\dim S))+1=\dim S$ and as a result the relation $\dim S=2n-2j_2-1$. This contradicts the second condition.

This completes the proof.
\end{proof}
%\begin{Thm}
%\label{thm:8}
I%n Problem in the introduction, let there exist just three groups $G_{j_1}$, $G_{j_2}$ and $G_{j_3}$ {\rm (}$1 \leq j_1<j_2<j_3 \%leq n-1${\rm )} in the family of the groups such that for  any pair of the torsion subgroups $T_{G_{j_1}} \subset G_{j_1}$, $T_{G_{j_2}} \subset G_{j_2}$ and $T_{G_{j_3}} \subset G_{j_3}$, the subgroups are not isomorphic.
%Then we have the following.
%\begin{enumerate}
%\item
%\item
%\end{enumerate}
%\end{Thm}
%\begin{proof}
%\end{proof}
\section{Remarks on manifolds}
The following gives important clues in knowing homology groups of manifolds admitting explicit fold maps. 
\begin{Fact}[\cite{kitazawa}, \cite{kitazawa2}, \cite{kitazawa3}, \cite{saeki2}, \cite{saekisuzuoka} etc.]
\label{fact:4}
For a fold map on a closed and connected manifold of dimension $m$ into an $n$-dimensional manifold with no boundary, let the relation $k=m-n>1$ hold. Then the quotient map onto the Reeb space induces isomorphisms of homology and homotopy groups of degree $l<k$ of the source manifold and those of the Reeb spaces if the following hold.
\begin{enumerate}
\item The map obtained by the restriction to the singular set onto the Reeb is injective {\rm (}a fold map is said to be {\rm simple} in this case: for the definition and fundamental and advanced properties, see \cite{saeki} and \cite{sakuma} for example{\rm )}.
\item Inverse images of regular values are disjoint unions of almost-spheres.
\item Indices of singular points are $0$ or $1$.
\end{enumerate}
If the map is special generic, then the quotient map onto the Reeb space induces isomorphisms of homology and homotopy groups of degree $l \leq k$ of the source manifold and those of the Reeb space.  
\end{Fact}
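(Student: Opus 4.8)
The statement is about the quotient map $q_f\colon M\to W_f$, and the plan is to study it over the natural stratification of $W_f$ by the image of the singular set. Put $k:=m-n$. By condition (1) the restriction of $q_f$ to $S(f)$ is an embedding, so $S:=q_f(S(f))$ is the disjoint union of two embedded closed $(n-1)$-dimensional submanifolds $S_0$ and $S_1$ of $W_f$, the images of the index-$0$ and of the index-$1$ locus (condition (3)). Choose a closed regular neighbourhood $N$ of $S$ in $W_f$ and set $W_0:=\overline{W_f\setminus N}$. Over $W_0$ the map $q_f$ is a fibre bundle whose fibre over a point is the corresponding connected component of a regular fibre of $f$; by condition (2) this fibre is an almost-sphere of dimension $k$, hence $(k-1)$-connected. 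So the homotopy exact sequence of this bundle and its Serre spectral sequence show that $q_f^{-1}(W_0)\to W_0$ induces isomorphisms on $\pi_l$ and $H_l$ for $l<k$ and epimorphisms for $l=k$; the same holds for the overlap $q_f^{-1}(N\cap W_0)\to N\cap W_0$, which is again such an almost-sphere bundle.

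Next I would analyse $q_f$ over $N$ using the normal forms at fold points. Over a neighbourhood of $S_0$ the map $q_f$ is locally the projection of a linear $D^{k+1}$-bundle onto its base, i.e.\ locally $\mathbb{R}^{n-1}\times\mathbb{R}^{k+1}\to\mathbb{R}^{n-1}\times[0,\infty)$, $(u,v)\mapsto(u,|v|^2)$; over a neighbourhood of $S_1$ it is locally $\mathbb{R}^{n-1}\times\mathbb{R}^{k+1}\to\mathbb{R}^{n-1}\times Y$, the quotient by the connected components of the level sets of $x_n^2+\cdots+x_{m-1}^2-x_m^2$, where $Y$ is the tripod (three half-lines joined at one point; here $k>1$ is used to see that the positive level sets are connected). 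In both cases the source and the target of the local model deformation retract, compatibly with $q_f$, onto the singular stratum, so $q_f$ restricted over $N$ is a homotopy equivalence onto $N$; condition (1) is exactly what makes these local pictures assemble into a genuine neighbourhood of $S$ without extra identifications.

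It then remains to glue the three pieces: comparing, via $q_f$, the Mayer--Vietoris sequences of $M=q_f^{-1}(N)\cup q_f^{-1}(W_0)$ and of $W_f=N\cup W_0$ together with the corresponding Seifert--van Kampen decompositions, and chasing the resulting ladder using that $q_f$ is an isomorphism on $\pi_l,H_l$ for $l<k$ and an epimorphism in degree $k$ on each piece, one obtains that $q_f\colon M\to W_f$ induces isomorphisms on $\pi_l$ and $H_l$ for $l<k$, which is the first assertion. For the special generic case condition (3) improves to: all indices are $0$, and the only additional point is to promote the degree-$k$ epimorphisms to isomorphisms. Each regular-fibre component $\Sigma\cong S^k$ can, using that the base of the almost-sphere bundle part is path-connected, be isotoped within $M$ to the boundary sphere of a fibre of the linear $D^{k+1}$-bundle of Fact~\ref{fact:1}, hence bounds a $D^{k+1}$ in $M$; so the fibre class vanishes in $\pi_k(M)$ and $H_k(M)$, the map $\pi_k(\Sigma)\to\pi_k(M)$ (resp.\ the corresponding Serre differential) is zero, the degree-$k$ maps become isomorphisms, and the same chase now gives isomorphisms for all $l\le k$. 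Alternatively this sharpening, together with an explicit description of $H_*(M)$ in terms of $W_f$ and the immersion $W_f\looparrowright\mathbb{R}^n$, is contained in \cite{saeki2}.

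I expect the real obstacle to be the behaviour of $q_f$ near the index-$1$ stratum $S_1$: there $W_f$ is genuinely not a manifold (the tripod factor $Y$), $q_f$ is not a fibre bundle, and one has to verify both that the displayed local model is a homotopy equivalence and that the local models patch along $S_1$ — this is where simplicity (condition (1)) and the almost-sphere hypothesis (condition (2)) do essential work. Keeping the Mayer--Vietoris and van Kampen bookkeeping honest, and checking that the degree-$k$ vanishing in the special generic case is compatible with the gluing, are the remaining points that require care rather than new ideas.
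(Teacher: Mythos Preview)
The paper does not give a proof of Fact~\ref{fact:4}; it is quoted as a known result from \cite{kitazawa}, \cite{kitazawa2}, \cite{kitazawa3}, \cite{saeki2}, \cite{saekisuzuoka}, and nothing further is written about it in this paper. So there is no ``paper's own proof'' to compare your sketch against.

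That said, your outline --- split $W_f$ into a regular neighbourhood $N$ of $q_f(S(f))$ and its complement $W_0$, use the $(k-1)$-connectedness of the almost-sphere fibre over $W_0$, use the local fold normal forms (definite and index~$1$) to see that $q_f$ is a homotopy equivalence over $N$, and then glue --- is exactly the strategy employed in the cited references, in particular \cite{saekisuzuoka} and \cite{saeki2}; your handling of the special generic sharpening via bounding the regular fibre by a $D^{k+1}$ is also the standard argument. The one place your sketch is thinner than you suggest is the gluing step for $\pi_l$ with $1<l<k$: Seifert--van~Kampen gives only $\pi_1$, and Mayer--Vietoris only homology, so ``chasing the ladder'' is not by itself enough. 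What one actually uses is that a homotopy pushout of $k$-connected maps is $k$-connected (equivalently, a relative Hurewicz/Whitehead argument once the $\pi_1$- and $H_l$-statements are in hand). This is a genuine, if standard, ingredient rather than mere bookkeeping; once you insert it, your sketch matches the arguments in the literature.
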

In short, if the assumption holds, the Reeb space inherits considerable information of the source manifold of the given fold map.

If we apply S-bubbling operations starting from special generic maps, then we can obtain maps satisfying the assumption of the fact. Maps presented in Example \ref{ex:1}, \ref{fact:2} etc. are simplest examples.

\end{document}